\title{Formulas in connection with parameters related to convexity of paths on three vertices: caterpillars and unit interval graphs}
\author{Luc\'ia M. Gonz\'alez\footnote{Instituto de Ciencias, Universidad Nacional de General Sarmiento, Los Polvorines, Buenos Aires, Argentina} \and Luciano N. Grippo\footnote{Instituto de Ciencias, Universidad Nacional de General Sarmiento, Los Polvorines, Buenos Aires, Argentina and Consejo Nacional de Investigaciones Cient\'ificas y T\'ecnicas (CONICET), Argentina}  \and Mart\'in D. Safe\footnote{Departamento de Matem\'atica, Universidad Nacional del Sur (UNS), Bah\'ia Blanca, Argentina and INMABB, Universidad Nacional del Sur (UNS)-CONICET, Bah\'ia Blanca, Argentina}}
\newtheorem{theorem}{Theorem}
\newtheorem{coro}{Corollary}
\newtheorem{lem}{Lemma}
\newtheorem{remark}{Remark}
\theoremstyle{definition}
\newtheorem{prop}{Proposition}
\begin{document}
\maketitle
\begin{verbatim}\end{verbatim}\vspace{2.5cm}

\begin{abstract}
We present formulas to compute the $P_3$-geodetic number, the $P_3$-hull number and the percolation time for a caterpillar, in terms of certain sequences associated with it. In addition, we find a connection between the percolation time of a unit interval graph and a parameter involving the diameter of a unit interval graph related to it. Finally, we present a hereditary graph class, defined by  forbidden induced subgraphs, such that its percolation time is equal to one. 

\end{abstract}


\section{Introduction}\label{intro}

The convexity generated by paths of length two has been widely studied in the specialized literature. From an algorithmic perspective, there are several results in connection with different parameters related to convexity in graphs, in particular vinculated to $P_3$-convexity. Centeno et al. proved that is NP-complete deciding whether the $P_3$-hull number of a graph is at most $k$ \cite{Centeno2011}. Another parameter in connection with a convexity is the geodetic number of a graph that, under the $P_3$-convexity, agrees with the $2$-domination number~\cite{centeno}. An interesting problem, considered for many researches, is determining the percolation time of a graph. Interesting enough is the problem of deciding whether the percolation time is at least $k$. This decision problem can be solved in polinomial time if $k=4$. Nevertheless, the problem becomes NP-complete if $k=5$ \cite{Marcilon2018}. In this article we study the existence of formulas to compute these parameters. Caterpillars, which are precisely those acyclic connected interval graphs, are considered by giving a formula for each of the previously mentioned parameters. In the case of a unit interval graph, we find a formula for the percolation time in terms of the diameter of certain unit interval graphs related to it.

All graphs considered in this article are finite, undirected, without loops, and without multiple edges. All graph-theoretic concepts and definitions not given here can be found in~\cite{West01}. Let $G$ be a graph. Let us denote by $V(G)$ and $E(G)$ its vertex set and edge set, respectively. Given $u$ and $v$ in $V(G)$, we say that $u$ is \emph{adjacent} to $v$ if $uv\in E(G)$. The \emph{neighborhood} of a vertex $u$, denoted $N_{G}(u)$, is the set $\{v\in V(G)\colon\,uv\in E(G)\}$, and $N_G[v]$ stands for the set $N_G(v)\cup\{v\}$. If $X$ is a finite set, $\vert X\vert$ denotes its cardinality. The \emph{degree} of a vertex $u$ is the cardinality of its neighborhood (i.e., $|N_G(u)|$) and it is denoted by $d_{G}(u)$. The \emph{length} of a path is its number of edges. The \emph{distance} between two vertices $u$ and $v$ of $G$, denoted $d_G(u,v)$, is the minimum length among all paths having $u$ and $v$ as their endpoints. The \emph{diameter} of $G$, when it is connected, denoted $\mathrm{diam}(G)$, is the maximum $d(u,v)$ among all pairs $u,v\in V(G)$, a path $P$ having $\mathrm{diam}(G)$ edges es called a \emph{diameter path}. A \emph{pendant vertex} is a vertex of degree $1$. A \emph{support vertex} is a vertex adjacent to a pendant vertex.  A \emph{cut vertex} of $G$ is a vertex $v$ such that the number of connected component of $G-v$ is greater than the number of connected components of $G$. An \emph{independet set} of $G$ is a set of pairwise nonadjacent vertices. The maximum cardinality of an independent set of $G$ is denoted by $\alpha(G)$.  A \emph{tree} is a connected acyclic graph. A  \emph{leaf} of a tree is a pendant vertex of it. A \emph{caterpillar} is a tree such that the removal of all its pendant vertices turns out to be a path.

A \emph{convexity} of a graph $G$ is a pair $(V(G),\mathcal C)$ where $\mathcal C$ is a family of subsets of $V(G)$ satisfying all the following conditions: $ \emptyset \in \mathcal C$, $V(G)\in \mathcal C$, and $\mathcal C$ is closed under intersections; i.e., $V_1\cap V_2\in \mathcal C$ for each $V_1,V_2\in \mathcal C$. Each set of the family $\mathcal C$ is called a \emph{$\mathcal C$-convex set}. Let $\mathcal P$ be a set of paths in $G$ and let $S\subseteq V(G)$. If $u$ and $v$ are two vertices of $G$, then the \emph{$\mathcal P$-interval of $u$ and $v$}, denoted $I_{\mathcal P}[u,v]$, is the set of all vertices lying in some path $P\in \mathcal P$ having $u$ and $v$ as endpoints. Let $I_{\mathcal P}[S]=\bigcup_{u,v\in S}{I_{\mathcal P}[u,v]}$. Let $\mathcal C$ be the family of all sets $S$ of vertices of $G$ such that, for each path $P\in \mathcal P$ whose endpoints belong to $S$, every vertex of $P$ also belongs to $S$; i.e., $\mathcal C$ consists of those subsets $S$ of $V(G)$ such that $I_{\mathcal P}[S]=S$. It is easy to show that $(V(G),\mathcal C)$ is a convexity of $G$ and $\mathcal C$ is called the \emph{path convexity generated by $\mathcal P$}. The \emph{$P_3$-convexity} is the path convexity generated by the set of all paths of length two. Equivalently, a \emph{$P_3$-convex set} is a set $S\subseteq V(G)$ such that for each vertex $v\in V(G)-S$, $v$ has at most one neighbor in $S$. The \emph{$P_3$-hull set} of a set $R\subseteq V(G)$ is the minimum $P_3$-convex set of $G$ containing $R$. A \emph{$P_3$-hull set} of $G$ is a set of vertices whose $P_3$-hull set is $V(G)$, and the minimum cardinality of a $P_3$-hull set of $G$,  denoted $h(G)$, is the \emph{$P_3$-hull number}. If $R$ is a $P_3$-hull set of $G$, we also say that \emph{$R$ percolates $G$}. In order to decide whether a set $R$ percolates $G$, we may build a sequence $R_0, R_1, R_2,\ldots$ in which $R_0=R$ and $R_{i+1}$ is obtained from $R_i$ by adding those vertices of $G$ having at most one neighbor in $\bigcup_{j=1}^{i-1} R_j$ and at least two neighbors in $\bigcup_{j=1}^i R_j$. If there exists some $t$ such that $R_t=V(G)$, then $R$ percolates $G$ and we define $\tau_R(G)$ as the minimum $t$ such that $R_t=V(G)$. We also define the \emph{percolation time of $G$} as $\tau(G)=\max\{\tau_R(G)\colon\,R$ percolates $G$\}. Let $G$ be a graph and let $v\in V(G)$. We say that \emph{$S$ percolates $v$} if $v\in I^k_{P_3}[S]$ for some integer $k$, and $\tau_S(v)$  stands for the minimum $k$ with this property. If $A\subseteq V(G)$ and $S$ percolates every vertex $v\in A$, $\tau_S(A)$ stands for the maximum $t_S(w)$ among all vertices $w\in A$. When the context is clear enough, we will use $\tau(v)$ and $\tau(A)$ for short.  A set $S\subseteq V(G)$ is called \emph{$P_3$-geodetic} if for each vertex $v\in V(G)$, $v\in S$ or $v$ has at least two neighbors in $S$. The minimum cardinality of a $P_3$-geodetic set of $G$, denoted $g(G)$, is called the \emph{$P_3$-geodetic number} of $G$. Notice that, by definition, a $P_3$-geodetic set is also a $P_3$-hull set and thus $h(G)\leq g(G)$ for every graph $G$. In this article we will deal with $P_3$-convexity only. So, from now on, in some cases, we will omit the `$P_3$-' prefix and $\mathcal P$ will denote the set of paths on three vertices.

A  \emph{unit interval graph} is a graph such that there exists one-to-one assignment between its vertex set and a family of closed intervals of unit length in the real line, such that two different vertices are adjacent if and only if their corresponding intervals intersect. Equivalently, $G$ is a unit interval graph if there exists a linear order, called \emph{unit interval order}, of its vertices so that the closed neighborhood of each vertex is an interval  under that order; i.e, a sequence of consecutive vertices.
 
This article is organized as follows. In Section~\ref{formulas} we deal with the geodetic number and the hull number of  a caterpillar. More precisely, in Subsection~\ref{sec:caterpillar}, we associate a sequence of positive integers to every caterpillar, called basic sequence, used along the rest of the section: in Subsections~\ref{sec:geodetic_number} and~\ref{sec:hull_number} we present a closed formula for the $P_3$-geodetic number and the $P_3$-hull number of a caterpillar, respectively, in terms of its associated basic sequence. Section~\ref{sec: percolation} is devoted to give formulas for the percolation time for caterpillars and unit interval graphs. In Subsection~\ref{subsec:PTCG} we study the percolation time of a caterpillar and in Subsection~\ref{subsec: UIG} we summarize some known results on unit interval graphs and then we deal with the problem of finding the percolation time of a unit interval graph. Finally, in Section~\ref{sec:special graph} we present a hereditary graph class for which the percolation time of any graph in the class is 1.

\section{Computing the geodetic number and hull number of a caterpillar}\label{formulas}

\subsection{Basic sequence of a caterpillar}\label{sec:caterpillar}

A \emph{dominating set} of a graph $G$ is a set $S$ of vertices of $G$ such that every vertex $v\in V(G)\setminus S$ has at least one neighbor on $S$. A \emph{dominating path} in a graph $G$ is a path $P$ whose vertex set is a dominating set of $G$. Let $T$ be a caterpillar. By definition, $T$ has some dominating path $P$. Let $P=v_1,\ldots,v_k$ be chosen so that its endpoints are leaves of $T$. For each vertex $w$ of $T$, we define the \emph{reduced degree of $w$ in $T$} by $\tilde d_T(w)=\min\{d_T(w),4\}$. The sequence $\tilde d_T(v_1),\tilde d_T(v_2),\ldots,\tilde d_T(v_k)$, denoted by $s(T)$, is called a \emph{reduced degree sequence of $T$}. Since a dominating path of $T$ having pendant endpoints is unique up to the choice of its endpoints, the reduced degree sequence of $T$ is well-defined. Notice also that this sequence is unique up to reversing the order of the vertices of $P$.  By construction, the first and the last term of $s(T)$ are equal to $1$ (i.e., $\tilde d_T(v_1)=\tilde d_T(v_k)=1$). 

The \emph{length} of $\alpha$, denoted $\vert\alpha\vert$, is the number of terms of $\alpha$. For each $i\in\{1,\ldots,\vert\alpha\vert\}$, we denote by $\alpha_i$ the $i$-th term of $\alpha$. Let $\alpha^1$ and $\alpha^2$ be two sequences of lengths $n_1$ and $n_2$, respectively. The \emph{concatenation of $\alpha^1$ and $\alpha^2$}, denoted by $\alpha^1\alpha^2$, is the sequence $\beta$ of length $n_1+n_2$ characterized by $\beta_i=\alpha_i^1$ for each $i\in\{1,2,\ldots,n_1\}$ and $\beta_{n_1+i}=\alpha^2_i$ for each $i\in\{1,2,\ldots,n_2\}$. If either $\alpha^1$ or $\alpha^2$ is the empty sequence, then $\alpha^1\alpha^2$ is either $\alpha^2$ or $\alpha^1$, respectively. Let $\Gamma$ be the family of (possibly empty) sequences $\gamma$ such that all the terms of $\gamma$ are equal to $4$. A \emph{basic sequence} $\lambda$ is a sequence such that either $\lambda\in \{1, 21, 22, 23, 24 \}$ (where each element of the set should be thought as a sequence), or $\lambda$ is one of the sequences $x\gamma1$, $x\gamma2$, $x\gamma{31}$, $x\gamma{32}$, $x\gamma{33}$, or $x\gamma{34}$ for some $x\in \{3,4\}$ and some $\gamma\in\Gamma$. We denote by $\Lambda$ the family of all basic sequences. A \emph{prefix} of a sequence $\alpha$ is a sequence $\beta$ such that $\alpha=\beta\eta$ for some (possibly empty) sequence $\eta$. If $\eta$ is nonempty, then $\beta$ is a \emph{proper prefix of $\alpha$}. Notice that no basic sequence is a proper prefix of another basic sequence.

\begin{lem}\label{lem:1} Let $\sigma$ be a finite sequence whose terms belong to $\{1,2,3,4\}$ and whose last term is $1$. Then, there exists some integer $p\geq 0$ and some $\lambda^0,\lambda^1,\ldots,\lambda^p\in\Lambda$ such that $\sigma=\lambda^0\lambda^1\cdots\lambda^p$. Moreover, the integer $p$ and sequences $\lambda^0,\lambda^1,\ldots,\lambda^p$ are uniquely determined.\end{lem}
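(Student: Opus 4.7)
The plan is to proceed by strong induction on $|\sigma|$, at each step peeling off a basic-sequence prefix from $\sigma$ and applying the inductive hypothesis to the remaining suffix.

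Both existence and uniqueness reduce to the following key claim: every finite $\{1,2,3,4\}$-valued sequence $\sigma$ whose last term is $1$ has exactly one basic sequence as a prefix. Uniqueness here is immediate from the observation, already noted in the paper, that no basic sequence is a proper prefix of another: if $\lambda,\lambda'\in\Lambda$ were two distinct prefixes of $\sigma$, the shorter would be a proper prefix of the longer. For existence, I would split on the value of $\sigma_1$. If $\sigma_1=1$, take the prefix to be the basic sequence $1$. If $\sigma_1=2$, then $|\sigma|\geq 2$ (since $\sigma$ ends in $1$), and $2\sigma_2$ belongs to $\{21,22,23,24\}\subseteq\Lambda$. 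If $\sigma_1\in\{3,4\}$, let $j\geq 2$ be the smallest index with $\sigma_j\neq 4$; such a $j$ exists because $\sigma_{|\sigma|}=1\neq 4$. Setting $\gamma=\sigma_2\cdots\sigma_{j-1}\in\Gamma$, if $\sigma_j\in\{1,2\}$ then $\sigma_1\gamma\sigma_j\in\Lambda$; otherwise $\sigma_j=3$, and since the last term of $\sigma$ equals $1$ we have $j<|\sigma|$, so $\sigma_{j+1}$ is defined and lies in $\{1,2,3,4\}$, giving the prefix $\sigma_1\gamma\,3\,\sigma_{j+1}$ of the form $x\gamma 3y\in\Lambda$.

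With the prefix claim in hand, the induction closes easily. Write $\sigma=\lambda^0\sigma'$ with $\lambda^0\in\Lambda$. If $\sigma'$ is empty, set $p=0$. Otherwise $\sigma'$ is strictly shorter than $\sigma$ and its last term (inherited from $\sigma$) equals $1$, so by the inductive hypothesis $\sigma'=\lambda^1\cdots\lambda^p$ for a unique $p\geq 0$ and unique $\lambda^1,\ldots,\lambda^p\in\Lambda$. Uniqueness of the full decomposition follows by combining uniqueness of $\lambda^0$ with the inductive uniqueness for $\sigma'$: any alternative decomposition $\sigma=\mu^0\mu^1\cdots\mu^q$ forces $\mu^0=\lambda^0$ by the prefix uniqueness claim, and the remaining factors then match by induction.

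The main technical obstacle I anticipate is the sub-case $\sigma_1\in\{3,4\}$ with $\sigma_j=3$: here the candidate prefix has length $j+1$ rather than $j$, so one must verify that $\sigma_{j+1}$ always exists. This is precisely where the hypothesis $\sigma_{|\sigma|}=1$ is essential, since it forbids $\sigma$ from ending in a $3$. Once that technicality is cleared, the rest of the argument is routine bookkeeping.
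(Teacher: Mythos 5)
Your proposal is correct and follows essentially the same route as the paper: induction on the length of $\sigma$, peeling off a uniquely determined basic-sequence prefix via the same case analysis on $\sigma_1$ (with the subcase $\sigma_1\in\{3,4\}$, $\sigma_j=3$ handled identically), and deducing uniqueness from the fact that no basic sequence is a proper prefix of another. Your explicit verification that $\sigma_{j+1}$ exists because $\sigma$ cannot end in $3$ is a welcome detail that the paper leaves implicit.
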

\begin{proof}
We prove the lemma by induction on the length of $\sigma$. Since the last term of $\sigma$ is equal to $1$, it is possible to define a prefix $\lambda^0$ of $\sigma$ as follows:
\begin{enumerate}[(i)]
\item if $\sigma_1=1$, let $\lambda^0=1$;
\item if $\sigma_1=2$, let $\lambda^0=\sigma_1\sigma_2=2\sigma_2$;
\item if $\sigma_1\in\{3,4\}$ and $j$ is the smallest integer greater than $1$ such that $\sigma_j\neq 4$, then:
    \begin{enumerate}
    \item if $\sigma_j\in\{1,2\}$, let $\lambda^0=\sigma_1\sigma_2\cdots\sigma_j=\sigma_1\gamma\sigma_j$ for some $\gamma\in\Gamma$;
    \item if $\sigma_j=3$, let $\lambda^0=\sigma_1\sigma_2\cdots\sigma_j\sigma_{j+1}=\sigma_1\gamma 3\sigma_{j+1}$ for some $\gamma\in\Gamma$.
    \end{enumerate}
\end{enumerate}
By construction, $\lambda^0\in\Lambda$. If $\sigma=\lambda^0$, the lemma holds trivially. Otherwise, $\sigma=\lambda^0\sigma'$ for some sequence $\sigma'$ whose last term is $1$ and, by induction hypothesis, there is some $p\geq 1$ and some $\lambda^1,\ldots,\lambda^p\in\Lambda$ such that $\sigma'=\lambda^1\cdots\lambda^p$; thus $\sigma=\lambda^0\lambda^1\cdots\lambda^p$. The uniqueness of $p$ and $\lambda^0,\lambda^1,\ldots,\lambda^p$ follows immediately by induction from the fact that no element of $\Lambda$ is a proper prefix of another element of~$\Lambda$.\end{proof}

From now on, all caterpillars considered in this article have at least two vertices. The following result is an immediate consequence of Lemma~\ref{lem:1}.

\begin{lem}\label{lem:concatenation}
If $T$ is a caterpillar and $s(T)$ is a reduced degree sequence of it, then there exists some integer $p\geq 1$ and some $\lambda^1,\lambda^2,\ldots,\lambda^p\in\Lambda$ such that $s(T)=1\lambda^1\lambda^2\cdots\lambda^p$. Moreover, the integer $p$ and the sequences $\lambda^1,\lambda^2,\ldots,\lambda^p$ are uniquely determined.
\end{lem}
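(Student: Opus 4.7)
The plan is to apply Lemma~\ref{lem:1} directly to $s(T)$ and peel off the leading term. By hypothesis, $T$ has at least two vertices, so the dominating path $P=v_1,\ldots,v_k$ with leaf endpoints has $k\geq 2$; consequently $s(T)=\tilde d_T(v_1),\ldots,\tilde d_T(v_k)$ is a nonempty sequence over $\{1,2,3,4\}$ whose first and last terms are $1$, and in particular it satisfies the hypothesis of Lemma~\ref{lem:1}.

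Next I would apply Lemma~\ref{lem:1} to obtain a unique decomposition $s(T)=\mu^0\mu^1\cdots\mu^p$ with $\mu^0,\ldots,\mu^p\in\Lambda$ and $p\geq 0$. Since the first term of $s(T)$ is $1$ and the only basic sequence beginning with $1$ is $1$ itself (any other would have $1\in\Lambda$ as a proper prefix, contradicting the observation preceding Lemma~\ref{lem:1}), we must have $\mu^0=1$. If $p=0$, then $s(T)=\mu^0=1$ has length one, contradicting $k\geq 2$. Hence $p\geq 1$, and setting $\lambda^i:=\mu^i$ for $i=1,\ldots,p$ gives $s(T)=1\lambda^1\lambda^2\cdots\lambda^p$ as required.

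For uniqueness, I would observe that any representation $s(T)=1\nu^1\cdots\nu^q$ with $q\geq 1$ and $\nu^i\in\Lambda$ is itself a decomposition of the form guaranteed by Lemma~\ref{lem:1} (prepending $\nu^0:=1\in\Lambda$), so the uniqueness asserted there forces $q=p$ and $\nu^i=\lambda^i$ for each $i=1,\ldots,p$.

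The only subtlety is verifying $p\geq 1$, which is where the standing convention that caterpillars have at least two vertices enters; the rest of the argument is a bookkeeping reduction to Lemma~\ref{lem:1}.
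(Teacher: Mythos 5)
Your argument is correct and follows exactly the route the paper intends: the paper states this lemma as an immediate consequence of Lemma~\ref{lem:1}, and your write-up simply makes explicit the routine steps (the first term $1$ forces $\lambda^0=1$ since $1$ is the only basic sequence beginning with $1$, the standing assumption $|V(T)|\geq 2$ gives $p\geq 1$, and uniqueness transfers by prepending $1\in\Lambda$). Nothing further is needed.
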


Now, we are ready to introduce a parameter $p(T)$ defined for each caterpillar $T$ and a reduced degree sequence $s(T)$. The \emph{basic sequence number} of a caterpillar $T$, denoted by $p(T)$, is the only positive integer $p$ such that $s(T)=1\lambda^1\lambda^2\cdots\lambda^p$, where $\lambda^i\in\Lambda$ for each $i\in\{1,2,\ldots,p\}$. An example is shown in Figure~\ref{fig:example}.

\begin{figure}
\begin{center}
\includegraphics[scale=.3]{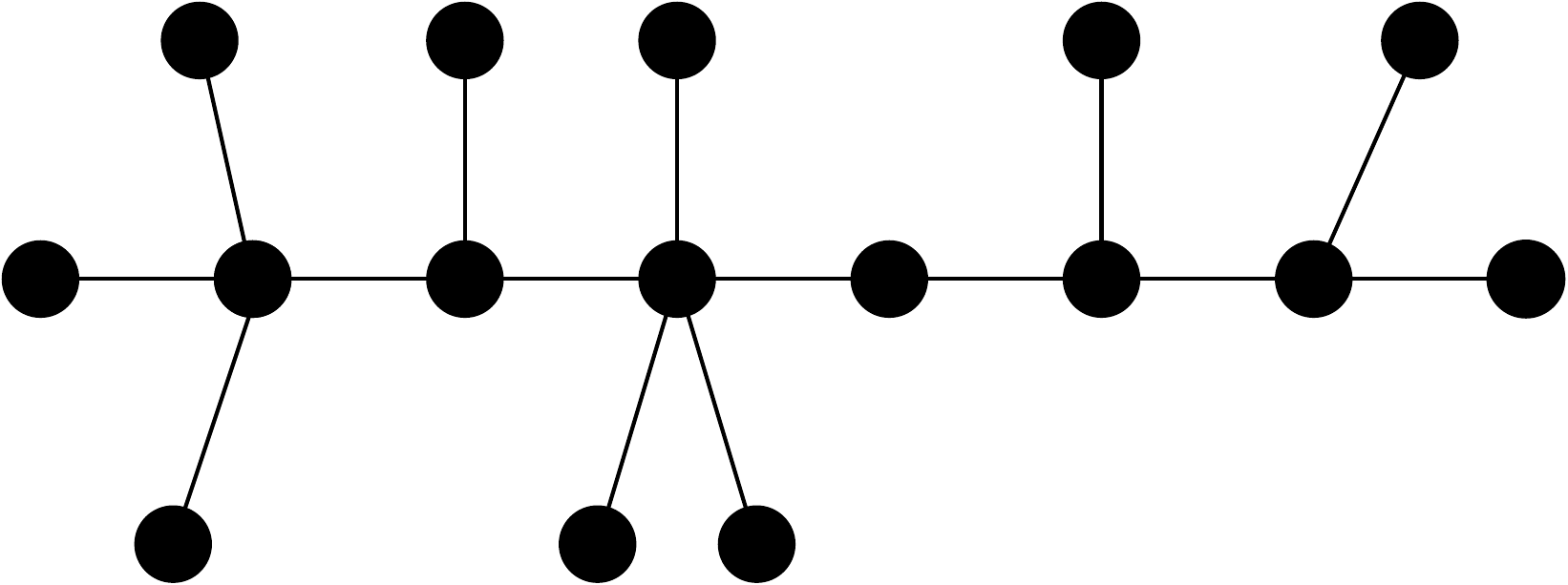}
\end{center}
\caption{The two possible reduced degree sequences of the depicted caterpillar $T$ is $s(T)=14342331$ (resp. $s(T)=13324341$), which is the concatenation of four basic sequences $1$, $434$, $23$, and $31$ (resp. $1$, $332$, $434$, $1$). Thus, $p(T)=4$.}
\label{fig:example}
\end{figure}

\subsection{$P_3$-geodetic number} \label{sec:geodetic_number}
We use $\ell(T)$ (resp.\ $\mathcal{L}(T)$) to denote the number of leaves (resp.\ the set of leaves) of a tree $T$. Recall that a geodetic set of a graph $G$ is a set $S$ of vertices such that every vertex outside $S$ has at least two neighbors in $S$ and $g(G) $ stands for the size of a minimum geodetic set in $G$. 

\begin{theorem}\label{thm: geodetic number of a caterpillar} If $T$ is a caterpillar having at least two vertices, then 
\[ g(T)=p(T)+\ell(T)-1. \]
\end{theorem}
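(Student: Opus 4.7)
The plan is to establish the identity by proving the upper bound $g(T)\le p(T)+\ell(T)-1$ and the matching lower bound separately. The starting point is that every $P_3$-geodetic set must contain $\mathcal{L}(T)$, because a leaf has exactly one neighbor and therefore cannot have two neighbors inside any such set. Writing $s(T)=1\lambda^1\lambda^2\cdots\lambda^p$ via Lemma~\ref{lem:concatenation}, I denote by $v_{a_j},\ldots,v_{b_j}$ the spine positions occupied by $\lambda^j$, so that $a_1=2$, $b_p=k$ and $a_{j+1}=b_j+1$. Because every attached leaf is forced into $S$, an internal spine vertex $v_i$ of reduced degree $4$ automatically has at least two neighbors in $S$; one of reduced degree $3$ requires exactly one of its spine neighbors to lie in $S$; and one of reduced degree $2$ requires both of them.

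For the upper bound I would exhibit the explicit set $S=\mathcal{L}(T)\cup\{v_{b_j}:1\le j\le p-1\}$, which has cardinality $\ell(T)+p(T)-1$, and verify the $P_3$-geodetic property by inspecting each basic-sequence type. Since $v_{b_j}\in S$ for every $j<p$, the ``right-hand'' requirement inside $\lambda^j$ (the trailing $2$ in $22$, $23$, $24$, $x\gamma 2$ or $x\gamma 32$, or the penultimate $3$ in $x\gamma 33$ and $x\gamma 34$) is satisfied; the ``left-hand'' requirement at position $a_j$ is supplied by $v_1$ when $j=1$ and by $v_{b_{j-1}}\in S$ when $j>1$; and every intermediate position has reduced degree $4$, so it needs nothing. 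For $\lambda^p$ the trailing $1$ is the leaf $v_k\in S$, which takes care of the one position (of reduced degree $2$ in $21$, or reduced degree $3$ in $x\gamma 31$) that might otherwise lack a spine neighbor, so no vertex of $\lambda^p$ needs to be added to $S$.

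For the lower bound I would show that every $P_3$-geodetic set $S$ satisfies $S\cap\{v_{a_j},\ldots,v_{b_j}\}\neq\emptyset$ for each $j$ with $1\le j\le p-1$; since these index ranges are pairwise disjoint and disjoint from $\mathcal{L}(T)$, summing the inequalities yields $|S|\ge\ell(T)+p(T)-1$. I argue by contradiction: if some $\lambda^j$ with $j<p$ is entirely outside $S$, then its whole block lies in a single maximal run $G$ of internal spine vertices not in $S$. A direct analysis of the constraints on reduced degrees $2$ and $3$ restricts the reduced-degree pattern of $G$ to one of only three admissible types, namely the empty string, a single character $2$, or a string starting and ending with a character in $\{3,4\}$ whose strict interior consists entirely of $4$'s. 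Comparing these admissible patterns against the explicit list of elements of $\Lambda$ yields a contradiction whether $G$ coincides with $\lambda^j$ or strictly contains it.

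The main obstacle is precisely this last case analysis. The crucial structural fact is that every basic sequence in $\Lambda$ either ends in $1$ or $2$, or else carries a $3$ in its penultimate position (as in $x\gamma 33$ and $x\gamma 34$); consequently no $\lambda^j$ with $j<p$ can be embedded into an allowed gap of type $d_a 4^{*} d_b$ with $d_a,d_b\in\{3,4\}$, since such an embedding would force either the penultimate $3$ of $\lambda^j$, or the $2$ at one of its ends, into the all-$4$'s strict interior of $G$ or into one of $G$'s boundary positions that must itself belong to $\{3,4\}$. The singleton-gap possibilities are excluded because no single character from $\{2,3,4\}$ is a basic sequence. Spelling out these structural incompatibilities case by case closes the lower bound and establishes $g(T)=p(T)+\ell(T)-1$.
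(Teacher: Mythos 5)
Your proof is correct and takes essentially the same route as the paper: the upper bound uses the identical witness set $\mathcal{L}(T)\cup\{v_{b_1},\ldots,v_{b_{p-1}}\}$, and the lower bound rests on the same claim that any geodetic set meets each of the $p-1$ index blocks, derived from the same degree-$2$/degree-$3$ constraints (the paper exhibits a forced vertex directly where you argue by contradiction via maximal gaps, a purely cosmetic difference). One small slip: the ``crucial structural fact'' as you state it fails for the basic sequences $23$ and $24$ (they end in $3$ resp.\ $4$ and their penultimate term is $2$), but your actual case analysis already disposes of them through the ``$2$ at one of its ends'' clause, so nothing breaks.
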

\begin{proof} Let $P=v_1,v_2,\ldots,v_k$ be a dominating path of $T$ where $v_1$ and $v_k$ are leaves of $T$. Let $\sigma=s(T)=d_T(v_1)\tilde d_T(v_2)\cdots\tilde d_T(v_k)$. By Lemma~\ref{lem:concatenation}, $\sigma=1\lambda^1\cdots\lambda^p$ for some $\lambda^1,\ldots,\lambda^p\in\Lambda$, where $p=p(T)$. Let $t_0=1$ and let $t_j=\vert 1\lambda^1\cdots\lambda^j\vert=1+\vert\lambda^1\vert+\cdots+\vert\lambda^j\vert$ for each $j\in\{1,2,\ldots,p\}$. Thus $1=t_0<t_1<\cdots<t_p=k$. 
Let $S$ be the set $\{v_{t_1},v_{t_2},\ldots,v_{t_{p-1}}\}\cup\mathcal{L}(T)$. Since none of $v_{t_1},v_{t_2},\ldots,v_{t_{p-1}}$ is a leaf of $T$, $\vert S\vert=p(T)+\ell(T)-1$.

We claim that $S$ is a geodetic set of $T$. Let $v$ be a vertex in $V(T)\setminus S$. Since $\mathcal L(T)\subseteq S$, $v\in V(P)$. Thus $v=v_h$ for some $h\in\{1,2,\ldots,k\}$. Moreover, since $v_{t_0},v_{t_1},\ldots,v_{t_p}$ are vertices of $S$, it follows that $h\neq t_j$ for each $j\in\{0,1,\ldots,p\}$. Let $j\in\{1,\ldots,p\}$ such that $t_{j-1}<h<t_j$ and let $i$ such that $h=t_{j-1}+i$. Notice that, $\tilde d_T(v_h)=\sigma_h=\lambda^j_i$ for some $1\leq i<\vert\lambda^j\vert$, because $\sigma=1\lambda^1\ldots\lambda^p$. Since $h\notin \{t_0,t_p\}$, $\lambda^j_i\neq 1$. 

Suppose first that $\lambda^j_i=2$. Hence $\lambda^j\in\{21,22,23,24\}$ and $i=1$, because $\lambda^j\in\Lambda$. Consequently $t_{j-1}=h-1$ and $t_j=h+1$. Therefore, $v_h$ is adjacent to $v_{t_{j-1}}$ and $v_{t_j}$. Suppose now that $\lambda^j_i=3$. Hence $\lambda^j\in\Lambda$ and $\lambda^j_1\in\{3,4\}$. Besides, $i=1$ or $i=\vert\lambda^j\vert-1$. Notice that $t_{j-1}=h-1$, $t_j=h+1$, and $d_T(v_h)\ge \tilde d_T(v_h)=\lambda^j_i=3$. On the one hand, if $i=1$, then $v_h$ is adjacent to $v_{t_{j-1}}$ and some leaf of $T$. On the other hand, if $i=\vert\lambda^j\vert-1$, then $v_h$ is adjacent to $v_{t_j}$ and some leaf of $T$. Therefore, $v$ is adjacent to at least two vertices in $S$. It remains to consider the case in which $\lambda^j_i=4$. In this case, $\tilde d_T(v_h)=\lambda^j_i=4$. Hence $v_h$ is adjacent to at least two leaves of $T$. Therefore $v_h$ has at least two neighbors in $S$. This completes the proof of our claim.

Now we must prove that $S$ is a geodetic set of $T$ of minimum cardinality. Consider any geodetic set $S'$ of $T$ and let $j\in\{1,2,\ldots,p-1\}$. We claim that $v_h\in S'$ for some $h\in\{t_{j-1}+1,t_{j-1}+2,\ldots,t_j\}$. Suppose first that $\lambda^j$ has some term equal to $2$. Since $\lambda^j\in\Lambda$, either the first or the last term of $\lambda^j$ is equal to $2$. On the one hand, if the first term of $\lambda^j$ is equal to $2$, then $\tilde d_T(v_{t_{j-1}+1})=2$ and thus $v_h\in S'$ for $h=t_{j-1}+1$ or $h=t_j$. On the other hand, if the last term of $\lambda^j$ is equal to $2$, then $\tilde d_T(v_{t_j})=2$ and thus $v_h\in S'$ for $h=t_j-1$ or $h=t_j$. In both cases the claim holds. We can assume now, without loss of generality, that $\lambda^j$ has no term equal to $2$. Since $j\neq p$, no term of $\lambda^j$ is equal to $1$. Hence each term of $\lambda$ is equal to $3$ or to $4$. Notice that $t_j-2\geq t_{j-1}+1$, because $\vert\lambda^j\vert\geq 3$. Besides, since $\lambda^j\in\Lambda$,  $\tilde d_T(v_{t_j-1})=3$ and thus $v_h\in S'$ for some $h\in\{t_j-2,t_j-1,t_j\}$.  Thus $S'$ contains at least $p-1$ nonpendant vertices of $P$. Since $S'$ contains all leaves of $T$, $\vert S'\vert\ge p+\ell(T)-1=\vert S\vert$. We have already proved that $S$ is a geodetic set of $T$ of minimum cardinality. Therefore $g(T)=p(T)+\ell(T)-1$.\end{proof}

\begin{remark}
	If follows from Theorem~\ref{thm: geodetic number of a caterpillar} that $p(T)$ does not depend on the choice among the two possible linear orders of the vertices of a dominating path of  $T$ (see Fig.~\ref{fig:example}).
\end{remark}

\subsection{$P_3$-hull number} \label{sec:hull_number}
 
Notice that pendant vertices belong to every hull set of any connected graph. 

\begin{lem} \label{lemma}
If $G$ is a graph, then there is a minimum hull set having no vertex of degree $3$ adjacent to a pendant vertex of $G$.
\end{lem}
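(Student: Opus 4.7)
My plan is an exchange argument. Let $S$ be a minimum hull set, and suppose some $v\in S$ has degree $3$ and is adjacent to a pendant $u$; let $w_1,w_2$ be the other two neighbors of $v$. I will transform $S$ into another minimum hull set not containing $v$, then iterate this type of argument on any remaining offending vertices.

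First, $u\in S$, since pendants belong to every hull set. Then I argue $w_1\notin S$ and $w_2\notin S$: if $w_1\in S$, the vertex $v$ would have two distinct neighbors $u,w_1$ in $S\setminus\{v\}$, so $v$ would be added at step one of the percolation starting from $S\setminus\{v\}$; hence $S\subseteq(S\setminus\{v\})\cup\{v\}$ lies in the percolation of $S\setminus\{v\}$, and since $S$ percolates $G$, so does $S\setminus\{v\}$, contradicting $|S|=h(G)$. The argument for $w_2$ is identical.

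The core step is the swap $S'=(S\setminus\{v\})\cup\{w_1\}$. From $w_1\notin S$ and $v\in S$ we get $|S'|=|S|$. In the percolation from $S'$ the vertex $v$ has the two neighbors $u,w_1\in S'$ and is added at step one, so $S\subseteq S'\cup\{v\}$ lies in the percolation of $S'$; hence $S'$ percolates $G$ and is a minimum hull set no longer containing $v$. If $w_1$ is not itself of degree $3$ adjacent to a pendant, then the number of such vertices present in the hull set has strictly decreased.

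If $w_1$ is again of degree $3$ adjacent to a pendant, one reapplies the same argument to $S'$ and $w_1$: its two non-pendant neighbors are $v$ (already removed) and some $z_1\notin S'$ (by the analogue of the minimality argument), and a further swap replaces $w_1$ by $z_1$. The successive swaps trace a walk in $G$ whose vertices are pairwise distinct, because at each step the newly inserted vertex is absent from the preceding hull set. The main obstacle is arguing that this walk terminates at a non-offending vertex; in the caterpillar setting of this section, each swap moves along the spine, which is a path, so the walk cannot revisit a vertex and must end at a vertex that is not of degree $3$ adjacent to a pendant, producing the claimed minimum hull set.
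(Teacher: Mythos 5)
Your proposal follows the same exchange/walk-of-swaps strategy as the paper, and the individual swap steps (pendants lie in every hull set, the other two neighbors of an offending vertex lie outside a minimum hull set, and each swap preserves both cardinality and the percolation property) are correct. The genuine gap is exactly the one you flag at the end: termination of the walk for a general graph $G$. Your stated reason for distinctness of the walk's vertices --- ``the newly inserted vertex is absent from the preceding hull set'' --- does not give it, because a vertex that was swapped \emph{out} at an earlier step is also absent from the current hull set and can perfectly well be re-entered. Worse, the gap cannot be closed, since the lemma as stated is false for general graphs: attach one pendant vertex to every vertex of a triangle (or of any cycle). Every hull set contains all the pendants; the pendants alone percolate nothing, since each cycle vertex has exactly one pendant neighbor; and the pendants together with any single cycle vertex do percolate $G$. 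Hence $h(G)$ equals the number of pendants plus one, and every minimum hull set contains a cycle vertex, which has degree $3$ and is adjacent to a pendant. On this graph your sequence of swaps (and the paper's) simply travels around the cycle forever.

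In fairness, the paper's own proof has the same defect: it tacitly assumes that the successive swaps form a path of pairwise distinct vertices and derives a contradiction from choosing such a path of maximum length, an argument that breaks precisely when the next vertex $w_2$ coincides with an earlier vertex of the walk, as in the example above. Your retreat to caterpillars is the correct repair and is all that is actually needed, since the lemma is only invoked for caterpillars in Theorem~\ref{thm: hull-number-caterpillar}: in a tree the walk never backtracks (each step moves to the unique third neighbor, which is distinct from the previous vertex), so it is a path in a finite tree and must terminate; it cannot end at a vertex with two pendant neighbors (that would contradict minimality), so the final swap replaces an offending vertex by a non-offending one and the number of support vertices of degree $3$ inside the hull set strictly decreases. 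Iterating --- or, more cleanly, starting as the paper does from a minimum hull set chosen to minimize that number and exhibiting one decreasing walk --- finishes the proof for trees. In short: same method as the paper, valid in the tree/caterpillar setting where it is used, but neither your argument nor the paper's establishes the statement for arbitrary graphs, and no argument can.
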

\begin{proof} Arguing towards a contradiction, suppose that every minimum hull set of some connected graph $G$ has at least one support vertex of degree $3$. Let $S$ be a minimum hull set of $G$ having the minimum possible number $s$ of support vertices of degree $3$ in $G$. Assume that $v_1$ is a support vertex of degree $3$ in $S$, where $N_G(v_1)=\{a,b,c\}$ and $a$ is a pendant vertex of $G$. If $b$ were also a pendant vertex of $G$, then $b\in S$ and $v_1$ would have two pendant vertices, $a$ and $b$, as neighbors. Hence $S\subseteq I[S-\{v_1\}]$ which implies that $S-\{v_1\}$ is a hull set of $G$, contradicting that $S$ is a minimum hull set of $G$. Hence $b$ is not a pendant vertex of $G$. By symmetry, $c$ is not a pendant vertex of $G$. Besides, $b$ and $c$ do not belong to $S$. If $b\in S$, since $a\in S$, then $S\subseteq I_{\mathcal P}[S-v_1]$ and thus $S-\{v_1\}$ would not be a minimum hull set. 
	
Let $v_2=c$. Notice that $S_1=(S-\{v_1\})\cup\{v_2\}$ is also a hull set of $T$. This implies that, if $v_2$ is not a support vertex of degree $3$, then $S_1$ is a minimum hull set with $s-1$ support vertices of degree $3$, contradicting that $s$ is the minimum number of support vertices of degree $3$ for a hull set of $G$. Following this construction, we obtain a path $P=v_1,v_2,\ldots,v_k$ in $G$ such that, for every $i\in\{1,2,\ldots,k\}$, $v_i$ is a support vertex of degree $3$ and $v_k\in S$, where $k$ is the maximum positive integer having this property for some hull set $S$ of $G$. Hence $v_k$ has two neighbors $w_1$ and $w_2$ different from $v_{k-1}$ such that $w_1$ is a pendant vertex and $w_2$ is not a support vertex of degree $3$.  Otherwise, $P'=v_1,\ldots,v_k,w_2$ for $S'=(S-\{v_k\})\cup \{w_2\}$ would be a path longer than $P$ with the same property as $P$.  On the other hand, $w_2$ is not a pendant vertex because otherwise $v_k$ would be adjacent to two pendant vertices and thus $S-\{v_k\}$ would be a minimum hull set having fewer vertices than $s$ support vertices of degree $3$ . Hence $(S-\{v_k\})\cup\{w_2\}$ is a minimum hull set having $s-1$ support vertices of degree $3$, a contradiction. The contradiction arose from supposing that $G$ does not have a minimum hull without support vertices of degree $3$.\end{proof}

\begin{remark}\label{remark1}
Let $G$ be a graph and let $S$ be a hull set. 
\begin{itemize}
\item If $u$ and $v$ are adjacent vertices of degree $2$ of $G$, then $u\in S$ or $v\in S$.
\item Let $u$ be a cut vertex of degree $2$ of $G$ such that $u\notin S$ and whose neighbors are $v$ and $w$. If $G_v$ and $G_w$ are the connected components of $G-u$ such that $v\in V(G_u)$ and $w\in V(G_w)$, then $V(G_v)\cap S$ and $V(G_w)\cap S$ are hull sets of $G_v$ and $G_w$, respectively. 
\end{itemize}

\end{remark}

Let $s$ be a sequence of positive integers. We use $Z(s)$ to denote the summation  $\sum \lfloor { \frac{z_i}{2} } \rfloor$ taken over all the lengths $z_i$ in $s$ of the maximal consecutive subsequences containing only the integer $2$.

\begin{theorem}\label{thm: hull-number-caterpillar}
 Let $T$ be a caterpillar such that $|V(T)|\ge 2$. If $s'(T)$ is the sequence obtained by removing from $s(T)$ those terms corresponding to the integer $3$ (i.e., $s'(T)$ is the subsequence of those terms belonging to $\lbrace1,2,4\rbrace$), then
$$ h(T)=  \ell(T) + Z(s'(T)).$$
\end{theorem}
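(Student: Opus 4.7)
The plan is to partition the internal vertices of the dominating path $P=v_1,\dots,v_k$ according to reduced degree and analyze percolation block-by-block. First I would group the vertices of $P$ with $\tilde d_T\in\{2,3\}$ into maximal consecutive runs along $P$, call them \emph{blocks} $B_1,\dots,B_r$; within each $B_j$ let $u^{(j)}_1,\dots,u^{(j)}_{z_j}$ be the degree-$2$ vertices of $B_j$ in the order they appear on $P$ (with $z_j\ge 0$). Each block's two $P$-neighbors lie outside the block and have reduced degree $1$ (a leaf, hence in every hull set) or $4$ (hence with at least two leaf neighbors, and so percolated by time $1$ once all leaves are in the seed). Since blocks are separated in $s(T)$ by $1$'s or $4$'s, which survive the deletion of the $3$'s, the maximal runs of $2$'s in $s'(T)$ correspond bijectively to blocks with $z_j\ge 1$, of length $z_j$; hence $Z(s'(T))=\sum_j \lfloor z_j/2\rfloor$, and the theorem reduces to proving $h(T)=\ell(T)+\sum_j \lfloor z_j/2\rfloor$.

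For the upper bound I would take $S=\mathcal L(T)\cup\bigcup_j I_j$ with $I_j=\{u^{(j)}_2,u^{(j)}_4,\dots\}$, so $|I_j|=\lfloor z_j/2\rfloor$. To verify that $S$ percolates $T$, note that every degree-$3$ vertex of $B_j$ has a leaf of $T$ in $S$ and therefore percolates as soon as any one of its path-neighbors does; percolation thus fans out freely along any sub-path of degree-$3$ vertices once an adjacent vertex is activated. Inside $B_j$, the chain of degree-$3$ vertices to the left of $u^{(j)}_2$ activates through the outside $P$-neighbor of $B_j$, each chain between consecutive picked vertices $u^{(j)}_{2\ell}$ and $u^{(j)}_{2\ell+2}$ activates from either endpoint, and the chain to the right of the last picked vertex activates either through the other outside $P$-neighbor or vanishes when $z_j$ is even. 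Each unpicked $u^{(j)}_{2\ell+1}$ then sees both path-neighbors activated and percolates, giving $h(T)\le \ell(T)+Z(s'(T))$.

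For the lower bound I would use Lemma~\ref{lemma} to choose a minimum hull set $S$ with no degree-$3$ support vertex, so that $S\cap B_j$ is a set of degree-$2$ vertices; write $I_j=S\cap B_j$ and introduce virtual positions $0$ and $z_j+1$ for the two outside $P$-neighbors of $B_j$, both percolated by time $\le 1$. The key step is the positional characterization that the vertex at index $i$ eventually percolates in $T$ iff $i$ belongs to the closure of $A_0=I_j\cup\{0,z_j+1\}$ under the one-dimensional rule ``add $i$ once $i-1$ and $i+1$ are already present''. This rests on the auxiliary fact that a chain of degree-$3$ vertices lying between two degree-$2$ vertices activates iff at least one of the two endpoints activates first: the chain can transmit activation but cannot generate it from nothing. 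Since the one-dimensional closure reaches every position exactly when consecutive elements of $A_0$ differ by at most $2$, writing $z_j+1=\sum_\ell(i_{\ell+1}-i_\ell)\le 2(|I_j|+1)$ yields $|I_j|\ge \lfloor z_j/2\rfloor$, and summing over $j$ finishes the matching lower bound.

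The main obstacle is this reduction of $T$-percolation inside a block to the one-dimensional bootstrap on positional indices; I would need to argue cleanly that a chain of degree-$3$ vertices sandwiched between two un-picked degree-$2$ vertices cannot ignite on its own, as otherwise the lower bound could fail. Once the chain lemma is in hand, the rest is the classical fact that one-dimensional bootstrap percolation stalls precisely when the initial seed leaves two consecutive positions empty. Minor edge cases worth double-checking are blocks flanked by a leaf endpoint of $P$ (rather than a reduced-degree-$4$ vertex) and blocks with $z_j=0$, where both sides of the claimed formula contribute $0$ trivially.
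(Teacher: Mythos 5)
Your proposal is correct and follows essentially the same route as the paper: the same decomposition of the spine into maximal runs of reduced-degree $2$/$3$ vertices, the same appeal to Lemma~\ref{lemma} to exclude degree-$3$ support vertices from a minimum hull set, and the same alternating choice of degree-$2$ vertices for the upper bound. Your only real divergence is in packaging the lower bound as a one-dimensional bootstrap "gap at most $1$" argument with boundary seeds, which in fact makes explicit the chain-cannot-self-ignite step that the paper compresses into an appeal to the first statement of Remark~\ref{remark1} (stated there only for adjacent degree-$2$ vertices).
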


\begin{proof}

Let $P=v_1,v_2,\ldots,v_k$ be a maximum dominated path of $T$. Let $s(T)=\tilde d_T(v_1),\tilde d_T(v_2),\ldots,\tilde d_T(v_k)$. Recall that $\tilde d_T(v_1)=\tilde d_T(v_k)=1$. Let $B_j= \{v_{j_1},v_{j_2},\ldots,v_{j_{z_j}}\}$ be a maximal set of vertices in $P$ satisfying the following conditions: 

\begin{itemize}
\item $\tilde d_T(v_{j_i})=2$, for every $1 \le i \le z_j$, and
\item either $j_{i+1}=j_i+1$ for every $1\le i \le z_j-1$ or $\tilde d_T(v_h)=3$ for every $j_i < h < j_{i+1}$.  
\end{itemize}
 
Let $J$ be the number of those maximal subsequences in $s(T)$ and $S$ be a minimum hull set of $T$ having no vertex of degree $3$. Lemma~\ref{lemma} guarantees the existence of that minimum hull set. We know that $\mathcal{L}(T) \subset S$. Every vertex $v$ such that $\tilde d_T(v)=4$ does not belong to $S$ because it is adjacent to at least two vertices in $\mathcal{L}(T)$  and thus $v\in I_{\mathcal P}[S]$ for every hull set $S$. We consider the subsets $A_i= \{v_{j_{2i-1}}, v_{j_{2i}}\}$ of $B_j$, for every $1\le i \le z_j/2$ if $z_j$ is even and the subsets $A_i= \{v_{j_{2i-1}}, v_{j_{2i}}\}$ of $B_j$, for every $1\le i\le (z_j-3)/2$ and $A_{\lfloor { \frac{z_i}{2} } \rfloor}= \{ v_{j_{z_j-2}},v_{j_{z_j-1}},v_{j_{z_j}}\}$ if $z_j$ is odd and greater than one. In virtue of the first statement of  Remark~\ref{remark1}, $\vert S \cap A_i \vert\ge 1$ for all $1\le i\le \lfloor { \frac{z_i}{2} } \rfloor$. Hence $\vert S \cap B_j \vert \ge \lfloor \frac{z_j}{2}\rfloor$. This lower bound trivially holds even when $z_j=1$. Therefore, $h(T) \ge  \ell(T) + Z(s'(T))$.

It remains to prove that there exists a hull set $S$ such that $\vert S\vert= \ell(T) + Z(s'(T))$. For every $1\le j\le J$, let $C_j=\{v_{j_2},v_{j_4},\cdots, v_{j_{h_j}} \} \subset B_j$ where $v_{j_{h_j}}= v_{j_{z_j-1}}$ if $z_j$ is odd and $v_{j_{h_j}}= v_{j_{z_j}}$ if $z_j$ is even. Let $S=\mathcal{L}(T) \cup \left( \bigcup_{j=1}^{J} C_j \right)$. It is not hard to prove that $S$ is a hull set.

We have proved that the cardinality of the  minimum hull set is equal to $\mathcal{L}(T)$ plus $\sum_{j=1}^J\lfloor { \frac{z_j}{2} }\rfloor$. Therefore, $ h(T)= \ell(T) + Z(s'(T))$.

\end{proof}

\section{Percolation time}\label{sec: percolation}

\subsection{Percolation time of a caterpillar graph}\label{subsec:PTCG}

In~\cite{1} it was proved that if $T$ is a tree, then $\tau(T)$ can be computed in linear time. In this subsection we give simple closed formula for $\tau(T)$ when $T$ is a caterpillar, in terms of a certain sequences associated to $T$. 

Let $T$ be a caterpillar and let $s(T)$ be a reduced degree sequence of $T$. Let $\ell(i)$ be defined recursively for every $1\le i\le |s(T)|$ as follows: $\ell(1)=1$ and 
\[ \ell(i)=\begin{cases}
             \ell(i-1)+1&\text{if either $\tilde{d}(v_i)\neq 3$, or both $\tilde{d}(v_i)=3$ and $\tilde{d}(v_{i-1})\neq 3$},\\
             \ell(i-1)&\text{otherwise}.
           \end{cases} \]
Let us denote, for every $i\in\{1,\ldots,\vert s(T)\vert\}$, by $n(i)$ (resp.\ $m(i)$) the minimum (resp.\ maximum) integer $j$ such that $\ell(j)=\ell(i)$. In the example considered in Figure~\ref{fig:example}, if $s=1,2,3,4,2,3,3,1,$ $\ell=1,2,3,4,5,6,6,7$, $n=1,2,3,4,5,6,6,8$ and $m=1,2,3,4,5,7,7,8$. We define the sequence $f(T)$, called \emph{percolation sequence of $T$}, as follows. Let $i$ be an integer such that $1\le i\le|s(T)|$. 
\begin{itemize}
	\item $f_i(T)=0$, whenever  $\tilde{d}(v_i)=1$.	
	\item $f_i(T)=\min\{i-n(i),m(i)-i\}+1$, whenever $\tilde{d}(v_i)=3$ and $\tilde{d}(v_{n(i)-1})=\tilde{d}(v_{m(i)+1})\in\{1,2\}$.
	\item $f_i(T)=i-n(i)+1$, whenever $\tilde{d}(v_i)=3$, $\tilde{d}(v_{n(i)-1})=1$ and $\tilde{d}(v_{m(i)+1})=2$.
	\item $f_i(T)=m(i)-i+1$, whenever $\tilde{d}(v_i)=3$, $\tilde{d}(v_{n(i)-1})=2$ and $\tilde{d}(v_{m(i)+1})=1$.
	\item $f_i(T)=\min\{i-n(i)+1,m(i)-i+2\}$, whenever $\tilde{d}(v_i)=3$, $\tilde{d}(v_{n(i)-1})=1$ and $\tilde{d}(v_{m(i)+1})=4$.
	\item $f_i(T)=\min\{i-n(i)+2,m(i)-i+1\}$, whenever $\tilde{d}(v_i)=3$, $\tilde{d}(v_{n(i)-1})=4$ and $\tilde{d}(v_{m(i)+1})=1$.
	\item $f_i(T)=i-n(i)+2$, whenever $\tilde{d}(v_i)=3$, $\tilde{d}(v_{n(i)-1})=4$ and $\tilde{d}(v_{m(i)+1})=2$.
	\item $f_i(T)=m(i)-i+2$, whenever $\tilde{d}(v_i)=3$, $\tilde{d}(v_{n(i)-1})=2$ and $\tilde{d}(v_{m(i)+1})=4$.
	\item $f_i(T)=\min\{i-n(i),m(i)-i\}+2$, whenever $\tilde{d}(v_i)=3$, $\tilde{d}(v_{n(i)-1})=4$ and $\tilde{d}(v_{m(i)+1})=4$.
	\item $f_i(T)=1$ whenever  $\tilde{d}(v_i)=4$.
	\item $f_i(T)=1$, whenever $\tilde{d}(v_i)=2$ and $\tilde{d}(v_{i-1}),\tilde{d}(v_{i+1})\in\{1,2\}$.
	\item $f_i(T)=f_{i+1}(T)+1$, whenever $\tilde{d}(v_i)=2$, $\tilde{d}(v_{i-1})\in \{1,2\}$ and $\tilde{d}(v_{i+1})\notin \{1,2\}$.
	\item $f_i(T)=f_{i-1}(T)+1$, whenever $\tilde{d}(v_i)=2$, $\tilde{d}(v_{i-1})\notin\{1,2\}$ and $\tilde{d}(v_{i+1})\in\{1,2\}$.
	\item $f_i(T)=\max\{f_{i-1}(T),f_{i+1}(T)\}+1$, whenever $\tilde{d}(v_i)=2$, $\tilde{d}(v_{i-1}),\tilde{d}(v_{i+1})\notin\{1,2\}$.
\end{itemize}
We define $F(T)=\max_{1\le i\le r}f_i(T)$. In the example of Figure~\ref{fig:example}, $f(T)=0,1,2,1,3,2,2,1$ and thus $F(T)=3$.
\begin{theorem}
	If $G$ is a caterpillar, then $\tau(T)=F(T)$.
\end{theorem}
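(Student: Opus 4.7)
The plan is to prove $\tau(T)=F(T)$ by identifying, for each vertex $v$ of $T$, the parameter $\tau^{*}(v):=\max\{\tau_{R}(v):R\text{ percolates }T\}$ with either $f_{i}(T)$ when $v=v_{i}$ lies on the dominating path, or $0$ when $v$ is a pendant. Since $\tau(T)=\max_{v}\tau^{*}(v)$, this would yield $\tau(T)=\max_{i}f_{i}(T)=F(T)$.

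A few standing observations set up the case analysis. Every percolating set of $T$ contains all leaves (pendants together with the two endpoints of the dominating path). Also, $\tau_{R}(v)$ is monotone non-increasing in $R$, so $\tau^{*}(v)$ is attained at minimum percolating sets with respect to inclusion. Hence a minimum percolating set of a caterpillar is obtained by augmenting the leaf set by the smallest collection of spine vertices needed to ``break'' every maximal block of degree-$2$ spine vertices that would otherwise remain stuck in the percolation process.

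The heart of the argument is a structural case analysis mirroring the definition of $f_{i}$. When $\tilde d_{T}(v_{i})=4$, the two pendants in $R$ activate $v_{i}$ at step $1$ regardless of $R$. When $\tilde d_{T}(v_{i})=3$ and $v_{i}$ lies in the maximal degree-$3$ run $[n(i),m(i)]$, activation enters the run from each end at a time determined by the flanking reduced degree: a leaf neighbour starts propagation at time $0$, a degree-$4$ neighbour at time $1$, and a degree-$2$ neighbour only after the adjacent degree-$2$ block has been activated, which can be delayed arbitrarily in the worst case and effectively blocks that side. Combining the start time at each end with the distance travelled across the run reproduces the offsets of $+1$ or $+2$ together with either a minimum (both sides free) or a unilateral expression (one side blocked). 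When $\tilde d_{T}(v_{i})=2$, the requirement that both spine neighbours be active yields $\tau^{*}(v_{i})=\max\{\tau^{*}(v_{i-1}),\tau^{*}(v_{i+1})\}+1$, with a leaf neighbour contributing $0$; this matches the four sub-formulas of the degree-$2$ case.

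In each case the lower bound is witnessed by an explicit percolating set obtained from $\mathcal L(T)$ by adjoining a minimum system of spine vertices placed as far as possible from $v_{i}$ so as to maximise the propagation distance, while the upper bound follows by induction along the dominating path using the recursions above. The main obstacle I foresee is the delicate coupling between degree-$2$ blocks and degree-$3$ runs: the placement of the spine vertex one is forced to insert in a degree-$2$ block to make $R$ percolating simultaneously dictates the onset of propagation through each adjacent degree-$3$ run, so one must show that the worst case across these interlocking choices is captured by the proposed recursion and admits a common witness. The decomposition of $s(T)$ into basic sequences provided by Lemma~\ref{lem:concatenation} is the natural organising device, isolating these interactions into blocks that can be handled one at a time.
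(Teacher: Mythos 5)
Your overall plan coincides with the paper's: determine, for each spine vertex $v_i$, the worst case $\max\{\tau_S(v_i)\colon S\text{ percolates }T\}$ by a case analysis on $\tilde d(v_i)$ and on the reduced degrees flanking the maximal run of $3$'s containing $v_i$, exhibit extremal witness sets, and conclude $\tau(T)=\max_i f_i(T)$. However, there is a genuine error at the decisive point, the degree-$2$ case. The recursion you assert, $\tau^{*}(v_i)=\max\{\tau^{*}(v_{i-1}),\tau^{*}(v_{i+1})\}+1$ with only a \emph{leaf} neighbour contributing $0$, does not in fact match the four sub-formulas defining $f_i$ and is false. If a neighbour, say $v_{i-1}$, is itself a non-leaf vertex of degree $2$, then by Remark~\ref{remark1} every percolating set that omits $v_i$ must contain $v_{i-1}$, so in every scenario relevant to delaying $v_i$ that neighbour is a time-$0$ source; its worst case $\tau^{*}(v_{i-1})$ is attained only by sets containing $v_i$ and can never be combined with a late activation of $v_i$. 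Concretely, for the reduced degree sequence $1\,4\,3\,3\,2\,2\,3\,4\,1$ and $v_i$ the second degree-$2$ vertex, your recursion yields $\max\{4,2\}+1=5$, while the true worst case (and $f_i(T)$) equals $3$; lengthening the run of $3$'s on the left makes the overestimate arbitrarily large. The correct rule, as in the paper, is that any neighbour with reduced degree in $\{1,2\}$ is dropped from the maximum (it contributes $0$), which is precisely what distinguishes the four sub-formulas.

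Beyond this, the difficulty you yourself label ``the main obstacle'' --- the coupling between forced vertices in degree-$2$ blocks and the onset of propagation into adjacent degree-$3$ runs --- is exactly the substance of the proof and is left unresolved in your sketch. For the upper bounds one must show that for every percolating set $S$ with $v_i\notin S$ (which necessarily contains $\mathcal L(T)$ and one of any two adjacent degree-$2$ vertices), passing to a truncated set $R(S)$ obtained by deleting the run $\{v_{n(i)},\dots,v_{m(i)}\}$ together with the appropriate flanking vertex still percolates $v_i$ and only increases its activation time, yielding $\tau_S(v_i)\le f_i(T)$; for the lower bounds one needs explicit witnesses, such as $V(T)$ minus such an interval, realizing $f_i(T)$. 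Without the corrected degree-$2$ recursion and these two mechanisms, the asserted identities $\tau^{*}(v_i)=f_i(T)$, and hence $\tau(T)=F(T)$, are not established.
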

\begin{proof}
	Let $P=v_1,v_2,\ldots,v_k$ be a dominating path of $T$ where $v_1$ and $v_k$ are leaves of $T$. Let $s(T)=d_T(v_1)\tilde d_T(v_2)\cdots\tilde d_T(v_p)$. If $p\in\{1,2\}$, the result holds. 
	
	For every $i\in\{1,\ldots,p\}$, if $\tilde{d}(v_i)=1$ or $\tilde{d}(v_i)=4$, then $\tau_T(v_i)=0$ or $\tau_T(v_i)=1$ and thus $\tau(v_i)=f_i(T)$. 
	
	Let $i$ be an integer such that $1<i<k$. Assume that $\tilde{d}(v_i)=3$. Notice that every hull set $S$ of $T$ verifies $\mathcal{L}(T)\subseteq S$. From now on, we are considering percolating set of a given vertex containing $\mathcal{L}(T)$. If $\tilde{d}(v_{n(i)-1})=\tilde{d}(v_{m(i)+1})=1$, then $\mathcal{L}(T)$ is a hull set of $T$ and thus $\tau_S(v_i)\le\tau_{\mathcal{L}(T)}(v_i)$ for every hull set $S$ of $T$. It is easy to prove that $\tau_{\mathcal{L}(v_i)}(T)=\min\{i-n(i)+1,m(i)-i+1\}$ and thus $\tau_T(v_i)=f_i(T)$.  Suppose that $\tilde{d}(v_{n(i)-1})=1$ and $\tilde{d}(v_{m(i)+1})=2$. Notice that if $S$ percolates $v_i$ in $T$ and $\mathcal{L}(T)\subseteq S$, then $R(S)=S-\{v_{n(i)},\ldots,v_{m(i)+1}\}$ also percolates $v_i$. Hence $\tau_S(v_i)\le\tau_{R(S)}(v_i)=i-n(i)+1$ for every hull set $S$ of $T$. Notice that $S=V(T)-\{v_{n(i)},\ldots,v_{m(i)+1}\}$ is a hull set such that $\tau_S(v_i)=i-n(i)+1$. Hence $\tau_T(v_i)=f_i(T)$. Symmetrically, if $\tilde{d}(v_{n(i)-1})=2$ and $\tilde{d}(v_{m(i)+1})=1$, then $\tau_T(v_i)=m(i)-i+1=f_i(T)$. Analogously, if $\tilde{d}(v_{n(i)-1})=1$ and $\tilde{d}(v_{m(i)+1})=4$,  $S$ percolates $v_i$, then $R(S)=S-\{v_{n(i)},\ldots,v_{m(i)+1}\}$ percolates $v_i$, because $\mathcal{L}(T)\subseteq S$. Furthermore, $\tau_{R(S)}(v_i)=f_i(T)=\min\{i-n(i)+1,m(i)-i+2\}$. Hence, using a similar argument to the last case, we conclude that $\tau_T(v_i)=f_i(T)$. Symmetrically, $\tau_T(v_i)=f_i(T)=\min\{i-n(i)+2,m(i)-i+1\}$, if $\tilde{d}(v_{n(i)-1})=4$ and $\tilde{d}(v_{m(i)+1})=1$. The following cases, enumerated below, can be proved following the same line of argumentation and thus their proofs are omitted.
	\begin{itemize}
		\item If $\tilde{d}(v_{n(i)-1})=2$ and $\tilde{d}(v_{m(i)+1})=4$, then $\tau_T(v_i)=i-n(i)+2$.
		\item If $\tilde{d}(v_{n(i)-1})=4$ and $\tilde{d}(v_{m(i)+1})=2$, then $\tau_T(v_i)=m(i)-i+2$.
		\item If $\tilde{d}(v_{n(i)-1})=4$ and $\tilde{d}(v_{m(i)+1})=4$, then $\tau_T(v_i)=\min\{i-n(i)+2,m(i)-i\}+2$.
	\end{itemize} 
	Suppose now that $\tilde{d}(v_{n(i)-1})=2$ and $\tilde{d}(v_{m(i)+1})=2$. Let set $S$ be a set such that percolates $v_i$.  On the one hand, if $v_j\in S$ for some integer $j$ such that $n(i)\le j\le m(i)$, then $\tau_S(v_i)\le |i-j|\le \max\{i-n(i)+1,m(i)-i+1\}$.  On the other hand, if $S$ percolates $v_i$ and does no contain any vertex in $\{v_{n(i)},\ldots,v_{m(i)}\}$, then, by Remark~\ref{remark1}, some of $v_{n(i)-1}$ and $v_{m(i)+1}$ belongs to $S$. In addition, if $v_{n(i)-1}\in S$ (resp. $v_{m(i)+1}\in S$), then $R(S)=S-\{v_{n(i)},\ldots,v_{m(i)+1}\}$ (resp. $R(S)=S-\{v_{n(i)-1},\ldots,v_{m(i)}\}$) also percolates $v_i$. Hence $\tau_S(v_i)\le\tau_{R(S)}=i-n(i)+1$ (resp. $\tau_S(v_i)\le\tau_{R(S)}=m(i)-i+1$). Consequently, $\tau_S(v_i)\le \max\{i-n(i)+1,m(i)-i+1\}$ for every set $S$ that percolates $v_i$. Since $V(T)-\{v_{n(i)},\ldots,v_{m(i)+1}\}$ and $V(T)-\{v_{n(i)-1},\ldots,v_{m(i)}\}$ are hull sets of $T$, $\tau_T(v_i)=\max\{i-n(i)+1,m(i)-i+1\}=f_i(T)$.
	
	It remains to consider the case $\tilde{d}(v_i)=2$. If $\tilde{d}(v_{i-1})\in\{1,2\}$ and $\tilde{d}(v_{i+1})\in\{1,2\}$, by Remark~\ref{remark1} and since $\mathcal{L}(T)\subseteq S$,  $\tau_S(v_i)=1$ for every set $S$ that percolates $v_i$ such that $v_i\notin S$. 
	
	From now on, we are considering a set $S$ percolating $v_i$ such that $\mathcal{L}(T)\subseteq S$. Suppose now that $\tilde{d}(v_{i-1})\notin\{1,2\}$ and $\tilde{d}(v_{i+1})\in\{1,2\}$. Notice that $v_{i+1}\in S$ for every set $S$ percolating $v_i$ such that $v_i\notin S$ because of Remark~\ref{remark1}. If $\tilde{d}(v_{i-1})=4$, then $\tau_S(v_i)\le 2$ for every set $S$ percolating $v_i$ such that $v_i\notin S$. Since $S'=V(T)-\{v_{i+1},v_i\}$ percolates $v_i$ and $\tau_{S'}(v_i)=2$, $\tau_T(v_i)=2$. Symmetrically, if $\tilde{d}(v_{i+1})=4$ and $\tilde{d}(v_{i-1})\in\{1,2\}$, then $\tau_T(v_i)=f_i(T)$. Assume now that $\tilde{d}(v_{i-1})=3$ and $\tilde{d}(v_{i+1})\in\{1,2\}$. Hence, if $S$ is a hull set and thus percolates $v_i$ and $v_i\notin S$, then $v_{i+1}\in S$ (see Remark~\ref{remark1}). Besides, if $\mathcal{L}(T)\subseteq S$ then $S$ percolates $v_{n(i)-1}\in S$ whenever $\tilde{d}(v_{n(i)-1})\notin\{1,2\}$. Hence $R(S)=S-\{v_{n(i)-1},\ldots,v_{i+1}\}$  also percolates $v_i$ when $\tilde{d}(v_{n(i)-1})=4$, and $R(S)=S-\{v_{n(i)},\ldots,v_{i+1}\}$ also percolates $v_i$ otherwise.  
 	Thus $\tau_S(v_i)\le\tau_{R(S)}=f_i(T)+1$ for every $S$ percolating $v_i$. In addition, since $S'=V(T)-\{v_{n(i)-1},\ldots,v_i\}$ is a hull set of $T$ if $\tilde{d}(v_{n(i)-1})=4$ and $S'=V(T)-\{v_{n(i)},\ldots,v_i\}$ is a hull set of $T$ otherwise, and $\tau_{S'}(v_i)=f_i(T)+1$, it follows that $\tau_T(v_i)=f_i(T)+1$. By symmetry,  if $\tilde{d}(v_{i-1})\in\{1,2\}$ and $\tilde{d}(v_{i+1})\notin\{1,2\}$, then $\tau_T(v_i)=f_{i+1}(T)+1$. Following a similar line of argumentation it can be proved that if $\tilde{d}(v_{i-1}),\,\tilde{d}(v_{i+1})\notin\{1,2\}$, then $\tau_T(v_i)=\max\{f_i(T),f_{i+1}(T)\}+1$. 
 	
 	Since $\tau(T)$ is the maximum $\tau_S(v)$ among all vertices in $T$ and hull sets $S$ of $T$, the result follows from the above discussions.
\end{proof}

\subsection{Unit interval graphs}\label{subsec: UIG}

Let $(G,<)$ be linear order of the vertices of a graph $G$ such that $N[v]$ is an interval in that order, for every $v\in V(G)$. Denote by $v_L$ and $v_R$ the minimum and maximum vertex under that order, respectively.


\begin{prop}\label{shortest-path}
Let $G$ be a unit interval graph with a unit interval order $(G,<)$. If $u<v$, then every shortest path $u=y_1,\ldots,y_r=v$ verifies that $y_i<y_{i+1}$ for each integer $i$ such that $1\le i\le r-1$
\end{prop}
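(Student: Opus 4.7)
The plan is to argue by contradiction, exploiting the following elementary property of a unit interval order: since each closed neighborhood $N[w]$ is an interval under $<$, whenever $a<b<c$ are vertices of $G$ and $ac\in E(G)$, one automatically has $ab\in E(G)$ and $bc\in E(G)$ (indeed $b\in N[a]\cap N[c]$, as both $N[a]$ and $N[c]$ are intervals containing $a$ and $c$). I assume, for a contradiction, that some shortest $u$--$v$ path $u=y_1,\ldots,y_r=v$ fails to be strictly increasing in $<$, and I exhibit a strictly shorter $u$--$v$ walk.

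First I would locate an interior index $1<k<r$ at which the sequence $y_1,\ldots,y_r$ has a local extremum. Let $i$ be the smallest index with $y_{i+1}<y_i$; such an $i$ exists since the sequence is not strictly increasing, and $i<r$ because $y_{i+1}$ is defined. If $i\geq 2$, then by minimality of $i$ we have $y_{i-1}<y_i>y_{i+1}$, so $y_i$ is a local maximum and I set $k=i$. If $i=1$, so $y_1>y_2$, then, since $y_1<y_r$, the smallest index $k\geq 2$ with $y_{k+1}>y_k$ exists (otherwise $y_1>y_2>\cdots>y_r$, contradicting $y_1<y_r$), and for this $k$ one has $y_{k-1}>y_k<y_{k+1}$, a local minimum at $k$.

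Second, I apply the interval property to shortcut through $y_k$. In the local-maximum case, $y_{k-1}$ and $y_{k+1}$ are distinct vertices both strictly below $y_k$. If $y_{k-1}<y_{k+1}<y_k$, apply the property to the edge $y_{k-1}y_k$ with intermediate vertex $y_{k+1}$ to obtain $y_{k-1}y_{k+1}\in E(G)$; if $y_{k+1}<y_{k-1}<y_k$, apply it instead to the edge $y_k y_{k+1}$ with intermediate vertex $y_{k-1}$, with the same conclusion. The local-minimum case is handled symmetrically, with $y_k$ lying strictly below both $y_{k-1}$ and $y_{k+1}$. In every subcase $y_{k-1}y_{k+1}\in E(G)$, and so $y_1,\ldots,y_{k-1},y_{k+1},\ldots,y_r$ is a $u$--$v$ walk of length $r-2$, contradicting the minimality of $r$.

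The only real obstacle is bookkeeping: one must handle the boundary case $i=1$ separately so that the extremum is truly in the interior, and choose the correct edge ($y_{k-1}y_k$ versus $y_k y_{k+1}$) to which the interval property is applied depending on the relative order of $y_{k-1}$ and $y_{k+1}$. No structural fact beyond ``closed neighborhoods are intervals'' is required.
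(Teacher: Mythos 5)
Your proof is correct, and it reaches the conclusion by a slightly different mechanism than the paper. Both arguments rest on the same umbrella property of a unit interval order (if $a<b<c$ and $ac\in E(G)$, then $ab,bc\in E(G)$, because $N[a]$ and $N[c]$ are intervals), but the paper exploits the fact that a shortest path is an \emph{induced} path: it first shows that every interior vertex lies strictly between $u$ and $v$ (taking the largest index $i$ with $y_i<u$ and producing the chord $uy_{i+1}$), and then, assuming a descent $y_{j+1}<y_j$, it locates a later edge $y_\ell y_{\ell+1}$ that straddles $y_j$, producing the chord $y_jy_{\ell+1}$. You instead work directly with minimality of the length: you locate an interior local extremum $y_k$ of the sequence, use the umbrella property on one of the two path edges at $y_k$ (chosen according to the relative order of $y_{k-1}$ and $y_{k+1}$) to get $y_{k-1}y_{k+1}\in E(G)$, and delete $y_k$ to obtain a $u$--$v$ walk of length $r-2$, contradicting $d(u,v)=r-1$. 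Your version is self-contained (it never needs the separate fact that shortest paths are chordless) and is purely local, at the price of a short case analysis on the type of extremum and the boundary case $i=1$; the paper's version needs no extremum bookkeeping and yields along the way the slightly stronger information that all vertices of the path lie in the interval $[u,v]$ of the order, which is in the spirit of the monotonicity statements it goes on to use (Proposition~\ref{prop: distance uig} and Corollary~\ref{cor: distance uig}).
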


\begin{proof}
Consider a shortest path $u=y_1,\ldots,y_r=v$ in $G$. Since $P$ is a shortest path, $P$ turns out to be an induced path of $G$. Suppose, towards a contradiction, that there exists an integer $j$ such that $y_j<u$ and let $i$ be the biggest integer $i$ such that $y_i<u$. Since $y_i<u<y_{i+1}$, $u$ is adjacent to $y_{i+1}$, contradicting that $P$ is an induced path. Hence $u< y_i$ for every $1< i\le r$.  Analogously, it can be proved that $y_i<v$ for every $1\le i< r$. Now, suppose towards a contradiction that $y_{j+1}<y_j$ for some integer $j$ such that $1< j< r$. Since $y_j<y_r=v$, there exists an integer $\ell$ such that $j<j+1\le \ell<\ell+1\le r$ such that $y_{\ell}<y_j<y_{\ell+1}$. Hence $y_j$ is adjacent to $y_{\ell+1}$, contradicting that $P$ is an induced path path of $G$. 
\end{proof}

\begin{prop}\label{prop: distance uig}
Let $G$ be a connected unit interval graph with a unit interval order $(G,<)$. If $u<v<w$, then $d(u,v)\le d(u,w)$.
\end{prop}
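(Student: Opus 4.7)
The plan is to leverage Proposition~\ref{shortest-path} almost directly. First I would fix any shortest path $P\colon u = y_1, y_2, \ldots, y_r = w$ in $G$, so that $d(u,w)=r-1$. By Proposition~\ref{shortest-path}, because $u<w$, the vertices of $P$ form a strictly increasing sequence under the unit interval order: $y_1 < y_2 < \cdots < y_r$.

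Next I would locate $v$ within this sequence. Since $u = y_1 < v < y_r = w$, exactly one of the following holds: either $v = y_i$ for some $i \in \{2,\ldots,r-1\}$, in which case the prefix $y_1,\ldots,y_i$ of $P$ gives $d(u,v)\le i-1 \le r-1 = d(u,w)$ and the inequality follows immediately; or there exists $j \in \{1,\ldots,r-1\}$ with $y_j < v < y_{j+1}$.

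In this remaining case I would show that $v$ is adjacent to $y_j$. Since $y_j$ and $y_{j+1}$ are consecutive on $P$, they are adjacent, so $y_{j+1}\in N[y_j]$. Because $N[y_j]$ is an interval under $<$ and $v$ satisfies $y_j < v < y_{j+1}$, it follows that $v \in N[y_j]$, i.e.\ $v$ is adjacent to $y_j$. Hence $u = y_1, y_2, \ldots, y_j, v$ is a walk of length $j$ from $u$ to $v$, yielding $d(u,v) \le j \le r-1 = d(u,w)$, as required.

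I do not foresee a real obstacle: the content is essentially a direct application of Proposition~\ref{shortest-path} together with the defining interval property of a unit interval order. The only care needed is to cover the two positional cases (whether $v$ is a vertex of the chosen shortest $u$-$w$ path or strictly between two consecutive ones) and to check that the resulting bound never exceeds $r-1$, which is immediate from $v<w=y_r$ and the strict monotonicity of $P$.
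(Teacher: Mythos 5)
Your proof is correct and takes essentially the same route as the paper's: both fix a shortest $u$--$w$ path, invoke Proposition~\ref{shortest-path} to get the increasing order of its vertices, locate $v$ as either a vertex $y_j$ of the path or strictly between consecutive vertices $y_j<v<y_{j+1}$, and in the latter case use the unit interval order to get $v$ adjacent to $y_j$, yielding $d(u,v)\le j\le r-1=d(u,w)$. No changes needed.
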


\begin{proof}
Let $P=u=y_1,\cdots,y_r=w$ be a shortest path. By Proposition~\ref{shortest-path}, there exist an integer $j$ such that $1< j <r$ and $y_j\le v< y_{j+1}$. If $y_j=v$, then $d(u,v)=j-1<r-1=d(u,w)$. Assume now that $y_j<v< y_{j+1}$ and thus $v$ is adjacent to $y_j$. Since $u=y_1,\ldots,y_j,v$ is a path, $d(u,v)\le j\le r-1=d(u,w)$. 
\end{proof}

Denote by $a_R(v)$ (resp.\ $a_L(v)$) to the rightmost (resp.\ leftmost) adjacent vertex of $v$. 

\begin{coro}\label{cor: distance uig}
Let $G$ be a unit interval graph. 
\begin{enumerate}
	\item If $P\mbox{: }u=v_1,\cdots,v_k=w$ is a path such that $v_{i+1}=a_R(v_i)$ for every $1\le i\le k-1$, then $d(u,w)=k-1$; i.e, $P$ is a shortest path between $u$ and $w$.
	\item If $P\mbox{: }u=v_1,\cdots,v_k=w$ is a path such that $v_{i+1}=a_L(v_i)$ for every $1\le i\le k-1$, then $d(u,w)=k-1$; i.e, $P$ is a shortest path between $u$ and $w$.
\end{enumerate}
  
\end{coro}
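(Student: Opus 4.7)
My plan for part~(1) is to compare the ``greedy rightmost'' path with a shortest path from $u$ to $w$. I will first verify the preliminary fact that $v_1<v_2<\cdots<v_k$ in the unit interval order: since each $v_{i-1}$ is a neighbor of $v_i$ distinct from $v_{i+1}=\max N(v_i)$, one has $v_{i+1}>v_{i-1}$; and if $v_{i+1}<v_i$ held while later $v_{i+2}>v_i$, the interval property applied to $N[v_{i+2}]$ would put $v_i\in N(v_{i+2})$, contradicting the maximality of $v_{i+1}=a_R(v_i)$. In particular $u<w$.

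Next I will fix a shortest path $u=y_1,y_2,\ldots,y_r=w$, where $r=d(u,w)$; by Proposition~\ref{shortest-path}, $y_1<y_2<\cdots<y_r$. The main step is to prove inductively that
\[ v_i\geq y_i\qquad\text{for every }1\leq i\leq\min(k,r). \]
The base case $v_1=y_1$ is immediate. For the inductive step, assuming $v_i\geq y_i$: if $y_{i+1}\leq v_i$ then $v_{i+1}>v_i\geq y_{i+1}$; otherwise $y_i\leq v_i<y_{i+1}$, and I will invoke the interval property of $N[y_{i+1}]$ (which contains both $y_i$ and $y_{i+1}$, hence also $v_i$) to conclude $v_i\in N(y_{i+1})$, whence $y_{i+1}\leq a_R(v_i)=v_{i+1}$.

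To finish, I will argue by contradiction: if $k>r$, the claim at $i=r$ gives $v_r\geq y_r=w$, while strict monotonicity forces $v_r<v_k=w$. Hence $k\leq r$, so $k-1\leq d(u,w)$; together with the trivial bound $d(u,w)\leq k-1$ (since $P$ is a $u$--$w$ path of length $k-1$), this yields $d(u,w)=k-1$. I expect the ``interval-of-closed-neighborhood'' step in the induction to be the main substantive point; the monotonicity preliminary is the other place that requires some care. Part~(2) then follows from part~(1) applied to the reversed unit interval order, which remains a unit interval order and interchanges the roles of $a_R$ and $a_L$.
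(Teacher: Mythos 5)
Your proposal is correct and follows essentially the same route as the paper's proof: both compare the greedy rightmost path with a shortest path and show by induction, using that closed neighborhoods are intervals in the unit interval order, that the greedy path advances at least as fast (your $v_i\ge y_i$ is the paper's $v'_i\le v_i$), which forces $k-1\le d(u,w)$ and hence equality. The differences are only cosmetic: you make the monotonicity $v_1<\cdots<v_k$ explicit where the paper uses it tacitly (note that both arguments, like the statement itself, implicitly require that the final step to $v_k$ is not a leftward step), and you settle the case $k>r$ directly from the inequality at $i=r$ instead of the paper's additional umbrella-property contradiction.
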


\begin{proof}
Let $P'\mbox{: }u=v'_1,\cdots,v'_s=w$ be a shortest path; i.e, $s-1=d(u,w)$. Clearly, $v'_2\le a_R(v_1)=v_2$. Suppose, by inductive hypothesis, that $v'_h\le v_h$ for every integer $h$ such that $1< h< k \le s$. We will prove that $v'_k\le v_k$. Suppose, towards a contradiction, that $v'_k> v_k$. By inductive hypothesis, $v'_{k-1}\le v_{k-1}<v_k<v'_k$. Since $v'_{k-1}$ is adjacent to $v'_k$, $v_{k-1}$ is adjacent to $v'_k$. Hence $a_R(v_{k-1})=v_k<v'_k\le a_R(v_{k-1})$, a contradiction. This contradiction arose from supposing that $v'_k> v_k$.  Consequently $v'_k\le v_k$. We have already proved by induction that $v'_i\le v_i$ for every $1\le i\le s$ which implies $k\le s$ and thus, since $P'$ is a shortest path connecting $u$ and $w$, $d(u,w)=k-1$ as we want to prove. Suppose, towards a contradiction, that $s<k$ and thus $v_s<w$. Hence  $v'_{s-1}\le v_{s-1}<v'_s=w$ which implies that $v_{s-1}$ is adjacent to $w$. Thus $v_s=a_R(v_{s-1})\ge w$, contradicting that $v_s<w$. Therefore, $k\le s$. The second statement is proved in a symmetric way.
\end{proof}

By combining Proposition~\ref{prop: distance uig} and Corollary~\ref{cor: distance uig}, we obtain the following result.

\begin{coro}\label{Corolario-diametro}
If $G$ is a connected unit interval graph, then $d=\mathrm{diam}(G)=d(v_L,v_R)$. Besides, $P^L\mbox{: }v_L=v_1,a_R(v_1),\ldots,a_R(v_{k-1})=v_R$ and \\$P^R\mbox{: }v_R=v_1,a_L(v_1),\ldots,a_L(v_{k-1})=v_L$ are diameter paths of $G$.
\end{coro}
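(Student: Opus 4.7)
My plan is to reduce the corollary to Proposition~\ref{prop: distance uig} and Corollary~\ref{cor: distance uig}, handling the diameter identity and the two explicit diameter paths separately.

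For the identity $\mathrm{diam}(G)=d(v_L,v_R)$, I would show that $d(v_L,v_R)$ is an upper bound for all pairwise distances; the reverse inequality is trivial. Given any two vertices with $x\le y$, two applications of Proposition~\ref{prop: distance uig} suffice: applied in the given order to the triple $v_L\le y\le v_R$ it yields $d(v_L,y)\le d(v_L,v_R)$; applied in the reversed unit interval order to the triple (which in the reversed order reads $y<x<v_L$) it yields $d(y,x)\le d(y,v_L)$. Concatenating, $d(x,y)\le d(v_L,v_R)$, so taking the maximum over all pairs gives $\mathrm{diam}(G)\le d(v_L,v_R)$, and equality follows.

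For $P^L$, I would first verify that the greedy iteration actually terminates at $v_R$. For any $v\neq v_R$, connectivity of $G$ supplies a shortest path from $v$ to $v_R$; by Proposition~\ref{shortest-path} such a path is strictly increasing in the order, so its second vertex is a neighbor of $v$ lying strictly to its right, forcing $a_R(v)>v$. Hence iterating $a_R$ starting from $v_L$ produces a strictly increasing sequence in the finite vertex set, which therefore reaches $v_R$ after some $k-1\ge 0$ steps; call this walk $P^L$. The first assertion of Corollary~\ref{cor: distance uig} then gives that $P^L$ has length $k-1=d(v_L,v_R)=\mathrm{diam}(G)$, so $P^L$ is a diameter path. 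The argument for $P^R$ is entirely symmetric: run the same construction in the reversed unit interval order (where $v_R$ plays the role of $v_L$) and invoke the second assertion of Corollary~\ref{cor: distance uig}.

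The only delicate point is the implicit use of order-reversal: Proposition~\ref{prop: distance uig} as stated only gives monotonicity of distance on one side, so to deduce $d(x,y)\le d(v_L,y)$ and to treat $P^R$ in parallel with $P^L$ one must observe that reversing a unit interval order yields another unit interval order. This is immediate, since the defining condition that $N[v]$ be a sequence of consecutive vertices is symmetric under reversal. Once this observation is in place, the proof is a direct assembly of the tools established immediately above.
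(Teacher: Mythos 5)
Your proof is correct and follows essentially the same route as the paper, which states the corollary as an immediate combination of Proposition~\ref{prop: distance uig} and Corollary~\ref{cor: distance uig} without further detail. You simply make explicit the two points the paper leaves implicit—that reversing a unit interval order again gives a unit interval order, and that the greedy $a_R$ (resp.\ $a_L$) iteration terminates at $v_R$ (resp.\ $v_L$)—both of which are handled correctly.
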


\subsubsection{Percolation time of unit interval graph}\label{PT-UIG}

If $G$ is a unit interval graph, we say that a subset $S\subseteq V(G)$ is an \emph{interval respect to that order} if all vertices in $S$ appears consecutively. Notice that cliques in $G$ are intervals. The following result is a very helpful tool for the rest of the section.

\begin{theorem}[\cite{Centeno2011}]\label{thm: centeno2011}
Let $G$ be a $2$-connected chordal graph. If $u$ and $v$ are two vertices with at least one vertex in common, then $H=\{u,v\}$ is a $P_3$-hull set of $G$.
\end{theorem}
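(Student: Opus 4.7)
The plan is to prove the statement by contradiction. Let $S$ be the $P_3$-hull of $\{u,v\}$. Since $u$ and $v$ share a common neighbor $w$, the first $P_3$-percolation step places $w$ in $S$, so $\{u,v,w\}\subseteq S$ and $|S|\geq 3$. Two standing facts: (i) $G[S]$ is connected, by induction on the percolation process, since each added vertex has two already-percolated neighbors; (ii) every vertex of $W:=V(G)\setminus S$ has at most one neighbor in $S$, by $P_3$-convexity. The aim is to produce a vertex of $W$ with two neighbors in $S$, contradicting (ii).

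Suppose $W\neq\emptyset$. Fix a connected component $C$ of $G[W]$ and let $N:=N_G(C)\cap S$. Since $|S|\geq 3$, 2-connectedness forces $|N|\geq 2$: indeed $|N|=0$ disconnects $G$, while $|N|=1$ would put a cut vertex in $N$ (separating $C$ from the nonempty $S\setminus N$). Pick $s_1,s_2\in N$ with respective neighbors $c_1,c_2\in C$. If $c_1=c_2$, that vertex has two $S$-neighbors and we are done, so assume $c_1\neq c_2$.

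Now I would invoke chordality. Take a shortest path $Q\colon s_1=q_0,q_1,\ldots,q_m=s_2$ in $G[S]$ and a shortest path $P\colon c_1=p_0,p_1,\ldots,p_r=c_2$ in $G[C]$; both are induced in $G$. Attaching the boundary edges $s_1c_1$ and $s_2c_2$ gives a cycle $C^{\ast}$ of length $m+r+2\geq 4$, which chordality triangulates using chords of the form $q_ip_j$ (as $P,Q$ are induced). Consider the unique triangle containing the boundary edge $s_1c_1$: its third vertex is either some $q_i$ with $i\geq 1$ — giving $c_1$ the two $S$-neighbors $s_1,q_i$, a contradiction to (ii) — or $p_1$ (the only $P$-vertex adjacent to $c_1$ in $G$ by induced-ness of $P$). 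Thus the triangle is $(s_1,c_1,p_1)$, forcing the chord $s_1p_1$. Iterating on the remaining polygon, the triangle containing the edge $s_1p_j$ (for increasing $j$) has third vertex either some $q_i$ with $i\geq 1$, which gives $p_j$ two $S$-neighbors and a contradiction, or $p_{j+1}$. Eventually the process forces the triangle $(s_1,p_{r-1},c_2)$, which requires the edge $s_1c_2$; but then $c_2$ has the two $S$-neighbors $s_1,s_2$, the final contradiction. Hence $W=\emptyset$.

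The main obstacle I anticipate is making the sliding-triangle iteration precise: at each step one must justify that the third vertex of the current triangle is restricted to the two claimed choices, using induced-ness of $P$ (no chord $p_jp_k$ for $|j-k|\neq 1$) and (ii) (to rule out $Q$-side choices). A perhaps cleaner formalisation is to induct on the length of $C^{\ast}$: the chord produced by chordality splits $C^{\ast}$ into two sub-cycles, one of which still has the form of an induced $S$-path joined to an induced $W$-path via two boundary edges (with distinct $S$-endpoints and distinct $W$-endpoints), so the hypotheses persist on a strictly shorter cycle and the contradictory case is eventually reached.
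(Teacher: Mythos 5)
The paper offers no proof of this statement: it is imported verbatim from \cite{Centeno2011} (and its wording ``two vertices with at least one vertex in common'' is a slip for ``at least one common neighbour'', which is how you correctly read it), so there is no in-paper argument to compare yours against; your proposal is a self-contained proof and, in my reading, it is correct. The skeleton is sound: let $S$ be the hull of $\{u,v\}$, use convexity to get that every vertex of $W=V(G)\setminus S$ has at most one neighbour in $S$, use $2$-connectedness and $|S|\ge 3$ to get two distinct attachment vertices $s_1,s_2\in S$ of a component $C$ of $G[W]$, and then apply chordality to the cycle built from an induced $S$-path, an induced $C$-path and the two boundary edges to manufacture a vertex of $W$ with two $S$-neighbours. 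Two points should be tightened in a final write-up. First, connectivity of $G[S]$: the base set $\{u,v\}$ need not induce a connected graph, but the common neighbour $w$ enters $S$ at the first step, every later vertex is tied by two edges to earlier ones, and $u,v$ are joined through $w$, so the claim stands with this one-line patch. Second, ``the unique triangle containing the boundary edge'' only makes sense relative to a fixed triangulation of the cycle by chords; that such a triangulation exists in a chordal graph is itself a (standard, easy) induction on cycle length, so your sliding-triangle narrative needs that lemma stated. Your own alternative formulation is the cleaner route and does go through: induct on the length of the structured cycle (induced path in $S$ from $s_1$ to $s_2$, induced path in $W$ from $c_1$ to $c_2$, two boundary edges, with $s_1\neq s_2$ and $c_1\neq c_2$); any chord joins an $S$-vertex to a $W$-vertex, and it either lands on $c_1$ or $c_2$ (immediate contradiction, using $s_1\neq s_2$) or splits off a strictly shorter cycle of the same shape, where one must take care to pick the side on which the two $S$-corners remain distinct (always possible since the $S$-path has length at least one). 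With those details spelled out the contradiction $W\neq\emptyset$ is complete and the theorem follows.
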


\begin{lem}\label{lema_de_los_extremos}
Let $G$ be a $2$-connected unit interval graph. If $H=\{u,v\}$ is a hull set of $G$ such that $H\neq \{v_L,v_R\}$, then $\tau_H(G)=\max\{\tau_H(v_L),\tau_H(v_R)\}$. 
\end{lem}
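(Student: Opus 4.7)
The plan is to show $R_M = V(G)$ for $M := \max\{\tau_H(v_L), \tau_H(v_R)\}$; the reverse inequality $\tau_H(G) \geq M$ is immediate from $\tau_H(G) = \max_{w \in V(G)} \tau_H(w)$. Assuming without loss of generality that $u < v$ in the unit interval order, I decompose $V(G) = [v_L, u] \cup [u, v] \cup [v, v_R]$ and show each piece lies in $R_M$.

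The key intermediate claim is: for every $t \geq 0$, $R_t \cap [v_L, u]$ is an upper interval of the form $[L_u(t), u]$ (and symmetrically, $R_t \cap [v, v_R] = [v, R_v(t)]$). Granting this, at $t = \tau_H(v_L)$ one has $L_u(t) = v_L$, so $[v_L, u] \subseteq R_M$; symmetrically $[v, v_R] \subseteq R_M$. For the middle piece $[u, v]$: if $u$ and $v$ are adjacent then $R_1 \supseteq [a_L(v), a_R(u)] \supseteq [u, v]$; if non-adjacent, each vertex in the gap $(u, a_L(v))$ has $u$ and $a_L(v) \in N(u) \cap N(v) \subseteq R_1$ as two distinct neighbors in $R_1$ (and symmetrically for $(a_R(u), v)$), so $R_2 \supseteq [u, v]$. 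A short case analysis rules out $M \leq 1$ combined with non-adjacency: then $v_L, v_R \in R_1$ would force either $v_L \in N(u) \cap N(v)$ (hence $a_L(v) \leq v_L \leq u$, making $u$ adjacent to $v$) or $u = v_L$ together with $v_R \in N(u) \cap N(v)$ (hence $a_R(v_L) = v_R$, making $v_L$ adjacent to everyone including $v$), so $M \geq 2$ whenever $u, v$ are non-adjacent. In every case, $[u, v] \subseteq R_M$.

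For the inductive step of the sub-claim, let $z := \min(R_{t+1} \cap [v_L, u])$; the non-trivial case is $z < L_u(t)$. Then $z \in R_{t+1} \setminus R_t$ has at least two neighbors in $R_t$, and by the inductive hypothesis $R_t \cap [v_L, L_u(t) - 1] = \emptyset$, so these neighbors all lie in $(z, a_R(z)] \cap R_t$. For any intermediate $z' \in [z, L_u(t) - 1]$, unit-interval monotonicity gives $a_R(z') \geq a_R(z)$, and $(z, z'] \cap R_t = \emptyset$ (because $(z, z'] \subseteq [v_L, L_u(t) - 1]$), so each of $z$'s right-neighbors in $R_t$ actually lies in $(z', a_R(z')] \cap R_t$. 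Hence $z'$ also has at least two neighbors in $R_t$ and joins $R_{t+1}$, so $[z, L_u(t) - 1] \subseteq R_{t+1}$ and the interval structure is preserved; the mirror argument handles the right side.

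The main obstacle is this inductive step: verifying that the percolation never opens a gap inside $[v_L, u]$. Its crucial ingredients are the unit-interval monotonicity of $a_R(\cdot)$ (which lets the right-neighbors of the new leftmost vertex serve every intermediate vertex) and the inductive hypothesis (which rules out left-neighbors of new vertices in $R_t$). The treatment of the middle piece $[u,v]$ and the small-$M$ edge case are routine once one has the case split on the adjacency of $u$ and $v$.
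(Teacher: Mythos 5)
Your argument is correct, but it is built around a different invariant than the paper's. The paper proves that the infected set $I^k[H]$ is an interval of the unit interval order for every $k\ge 2$ (its Claim~1 shows the interval property propagates one step; its Claim~2 shows $I^2[H]$ is an interval via a case split on whether $uv\in E(G)$), and then extracts the equality from the observation that a proper interval must miss $v_L$ or $v_R$, so one of the two extremes is still uninfected at every time $k<\tau_H(G)$. You instead split $V(G)=[v_L,u]\cup[u,v]\cup[v,v_R]$ and prove a one-sided invariant valid for all $t\ge 0$: the infected vertices inside $[v_L,u]$ always form a suffix $[L_u(t),u]$ (symmetrically on the right), the inductive step being precisely the umbrella-property argument that a newly infected leftmost vertex drags the whole gap up to $L_u(t)$ into $R_{t+1}$; the middle segment is absorbed within $\min\{2,M\}$ steps by essentially the same adjacent/non-adjacent case analysis the paper performs inside its Claim~2, and the conclusion is obtained directly as $R_M=V(G)$ rather than through ``a proper interval misses an extreme.'' What your route buys is that the invariant holds from $t=0$, so no special handling of the first two steps of the process is needed and the nontrivial inequality $\tau_H(G)\le\max\{\tau_H(v_L),\tau_H(v_R)\}$ comes out in one stroke; what it costs is the separate treatment of $[u,v]$ and the small-$M$ edge cases -- in particular, in the adjacent case you should state explicitly that $M\ge 1$, which follows from $H\neq\{v_L,v_R\}$ just as your non-adjacent case analysis uses that hypothesis. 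Both proofs ultimately rest on the same umbrella property of the unit interval order, but yours is a self-contained and valid alternative.
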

 
\begin{proof}
Throughout the proof we will consider that the vertices of $G$ have a unit interval order ``$<$''. Assume that $u<v$. We will split the proof into two claims. 

\medskip
\textbf{Claim 1: } If $I^k[H]$ is an interval for some integer nonnegative integer $k$, then $I^{k+	1}[H]$ is an interval.

\medskip
Assume that $I^k[H]=[a,b]$ for some nonnegative integer $k$. Hence $A=I^{k+1}[H]\setminus I^{k}[H]=\bigcup C$, where the union is taking over all cliques $C$ such that $\vert C\cap [a,b]\vert\ge 2$; notice that, since $H$ is a hull set, $A\neq\emptyset$ whenever $I^k[H]\neq V(G)$.   Since $A$ is the union of intervals having nonempty intersection with $I^k[H]$ which is an interval, it follows that $I^{k+1}[H]=A\cup I^k[H]$ is also an interval.

\medskip
\textbf{Claim 2: } $I^2[H]$ is an interval.

\medskip
First, suppose that $uv\notin E(G)$. Notice that if $x\in I^1[H]\setminus H$, then $u<x<v$, because otherwise $u$ would be adjacent to $v$. In addition, $H_1=I^1[H]\setminus H$ is a clique. Suppose, towards a contradiction, that there exists two nonadjacent vertices $x,\, y\in H_1$. Assume, without loss of generality, that $x<y$. Hence, since $x<y<v$ and $xv\in E(G)$, $x$ is adjacent to $y$, reaching a contradiction. Since $H_1$ is a clique, it turns out to be an interval $[u_L,u_R]$ under the considered unit interval order. If there is no vertex $x$ such that either $u<x<u_L$ or $u_R<x<v$, then $I^1[H]$ is an interval and the assertion follows by Claim 1. Suppose now that it is not the case, and there exists a vertex $x\notin I^1[H]$ such that $u<x<u_L$. Since $u$ is adjacent to $u_L$,  $x$ is adjacent to $u$ and $u_L$. Analogously, if there exists a vertex $x$ such that $u_R<x<v$, then $x$ is adjacent to $u_R$ and $v$. Thus $[u,v]\subseteq I^2[H]$. In addition, if $z<y<u$ and $z\in I^2[H]$, then $y\in I^2[H]$. Because $z$ is adyacent to some vertex $x\in H_1$ and thus $z$ is also adjacent to $u$, which implies that $y$ is adjacent to $u$ and $x$. Symmetrically, if $v<y<z$ and  $z\in I^2[H]$, then $y\in I^2[H]$. Therefore, $I^2[H]$ is an interval.

Finally, suppose that $u$ is adjacent to $v$. Hence $x\in H_1$ if and only if $R=\{x,u,v\}$ is contained in a clique $C$ in $G$. Therefore $I^1[H]=\bigcup C $ where the union is taking over all the cliques containing $H$. Since all such cliques are intervals, $I^1[H]$ turns out to be an interval which implies, by Claim 2, that $I^2[H]$ is also an interval. 

\medskip
On the one hand, since $H\neq\{v_L,v_R\}$, if $\tau_H(G)=1$, then the result holds. On the other hand, if $k\ge 2$, since $I^k[H]$ is an interval for every $k\ge 2$ because of Claims 1 and 2, either $v_L\notin I^k[H]$ or $v_R\notin I^k[H]$ for every $1\le k<\tau_H(G)$. The result follows from these observations. 
\end{proof}

Let``$<$'' be a unit interval order of a unit interval graph graph $G$. We use $v\downarrow$ and $v\uparrow$ to denote the vertex immediately before $v$ and the vertex immediately after $v$, respectively. We denote by $V_k(H)$ the set of vertices with percolation time $k$ under the hull set $H$. We denote $L(I)$ and $R(I)$ to the leftmost vertex and the  rightmost vertex of the set $I\subseteq V(G)$ under the order ``$<$''.

\begin{coro}\label{dos-vertices-extremos}
If $G$ is a unit interval graph such that $|V(G)|\ge 3$ every two maximal cliques with nonempty intersection have at least two common vertices, then $\tau(\{v_L,v_L{\uparrow} \})=\tau(\{v_R{\downarrow},v_R\})=\mathrm{diam}(G)=\tau(G)$. 
\end{coro}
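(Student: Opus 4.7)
The clique hypothesis immediately implies that $G$ is $2$-connected: if $v_i$ were a cut vertex, then (since consecutive vertices in a connected unit interval graph are adjacent) $v_{i-1}$ and $v_{i+1}$ would belong to two distinct maximal cliques meeting only at $\{v_i\}$, contradicting the assumption. In particular $v_L$ has degree at least $2$ (else $\{v_L,v_L{\uparrow}\}$ would be a maximal clique sharing only $v_L{\uparrow}$ with the next one), so $v_L{\uparrow}{\uparrow}$ exists and is a common neighbor of $v_L$ and $v_L{\uparrow}$. By Theorem~\ref{thm: centeno2011}, $H:=\{v_L,v_L{\uparrow}\}$ and (symmetrically) $H':=\{v_R{\downarrow},v_R\}$ are hull sets of $G$. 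Since $|V(G)|\ge 3$ gives $H\neq\{v_L,v_R\}$, Lemma~\ref{lema_de_los_extremos} reduces the computation to $\tau_H(G)=\tau_H(v_R)$. From the proof of that lemma, every $I^k[H]$ is an interval of the unit interval order, and since $v_L\in H$ it has the form $[v_L,r_k]$ with $r_k:=R(I^k[H])$. Because $N[x]$ is an interval, for $x>r_k$ one has $|N(x)\cap[v_L,r_k]|\ge 2$ if and only if $x$ is adjacent to $r_k{\downarrow}$, so while $r_k<v_R$ the recurrence $r_{k+1}=a_R(r_k{\downarrow})$ holds.

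Let $u_1,\ldots,u_{d+1}$ be the diameter path $P^L$ from Corollary~\ref{Corolario-diametro}, with $u_1=v_L$, $u_{d+1}=v_R$, and $u_{i+1}=a_R(u_i)$. I will prove by induction that $r_k=u_{k+1}$ for every $1\le k\le d$. The base case $r_1=a_R(v_L)=u_2$ is immediate; for the inductive step, the recurrence reduces everything to proving the key equality
\[ a_R(u_{k+1}{\downarrow})=a_R(u_{k+1}) \]
whenever $u_{k+1}\neq v_R$. Let $C^*$ be the rightmost maximal clique of $u_k$ (so $u_{k+1}=R(C^*)$) and $C'$ the rightmost maximal clique of $u_{k+1}$; both contain $u_{k+1}$ and they are distinct (else $u_{k+1}=v_R$), so the clique hypothesis provides some $w\in(C^*\cap C')\setminus\{u_{k+1}\}$. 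From $w\le R(C^*)=u_{k+1}$ and $w\neq u_{k+1}$ we get $w\le u_{k+1}{\downarrow}$; combined with $w\ge L(C')$ this forces $L(C')\le u_{k+1}{\downarrow}$, hence $u_{k+1}{\downarrow}\in C'$. Monotonicity of the rightmost clique in the unit interval order then makes $C'$ the rightmost clique of $u_{k+1}{\downarrow}$ as well, whence $a_R(u_{k+1}{\downarrow})=R(C')=a_R(u_{k+1})$, closing the induction. Thus $r_d=v_R$, giving $\tau_H(v_R)\le d$; the matching lower bound comes from the straightforward induction $d(v_L,r_k)\le k$ for $k\ge 1$ (using Proposition~\ref{prop: distance uig}) together with $d(v_L,v_R)=d$. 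Consequently $\tau_H(G)=d=\mathrm{diam}(G)$ and, by a symmetric argument, $\tau_{H'}(G)=d$, so $\tau(G)\ge\mathrm{diam}(G)$.

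For the reverse inequality $\tau(G)\le\mathrm{diam}(G)$, one checks that every hull set $S$ satisfies $\tau_S(G)\le d$: a direct extension of Claim~1 of Lemma~\ref{lema_de_los_extremos} shows that $I^k[S]$ becomes an interval after finitely many steps, and then a parallel analysis of both endpoints (the recurrence for $R(I^k[S])$ as above, together with its symmetric version for $L(I^k[S])$) shows that these endpoints reach $v_R$ and $v_L$ within $d$ steps each. The main obstacle throughout is the key equality $a_R(u_{k+1}{\downarrow})=a_R(u_{k+1})$: without the clique-intersection hypothesis the recurrence $r_{k+1}=a_R(r_k{\downarrow})$ could lag strictly behind the diameter path, breaking the clean induction and allowing $\tau_H(v_R)$ to exceed $\mathrm{diam}(G)$.
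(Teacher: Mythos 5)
Your first half is sound and is essentially the paper's own mechanism: you establish $2$-connectivity, get that $\{v_L,v_L{\uparrow}\}$ and $\{v_R{\downarrow},v_R\}$ are hull sets via Theorem~\ref{thm: centeno2011}, reduce to $\tau_H(v_R)$ via Lemma~\ref{lema_de_los_extremos}, and your key equality $a_R(u_{k+1}{\downarrow})=a_R(u_{k+1})$ is exactly the paper's observation that otherwise the two maximal cliques $[a_L(R(I^k[H])),R(I^k[H])]$ and $[R(I^k[H]),a_R(R(I^k[H]))]$ would meet in a single vertex; both arguments conclude that the right frontier of $I^k[\{v_L,v_L{\uparrow}\}]$ traces the $a_R$-diameter path of Corollary~\ref{Corolario-diametro}, giving $\tau(\{v_L,v_L{\uparrow}\})=\tau(\{v_R{\downarrow},v_R\})=\mathrm{diam}(G)$ and hence $\tau(G)\ge\mathrm{diam}(G)$.

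The gap is in the reverse inequality $\tau(G)\le\mathrm{diam}(G)$, i.e.\ in showing that no hull set percolates more slowly than the extreme pair. Your sketch rests on two unsupported claims. First, ``a direct extension of Claim~1 shows $I^k[S]$ becomes an interval after finitely many steps'' gives nothing usable: Claim~1 only propagates intervalness, an arbitrary hull set $S$ need not be an interval, and the only time at which $I^k[S]$ is guaranteed to be an interval is when it already equals $V(G)$, so there is no control on when your endpoint analysis could start. Second, the recurrence $R(I^{k+1}[S])=a_R(R(I^k[S]){\downarrow})$ was derived under the hypothesis that $I^k[S]$ is an interval containing $v_L$, so that both $R(I^k[S])$ and its predecessor are percolated; for a general $S$ (say, containing an isolated seed far to the right whose neighbours are not yet percolated) the predecessor $R(I^k[S]){\downarrow}$ need not lie in $I^k[S]$, the right endpoint can stall, and the ``within $d$ steps'' conclusion simply does not follow from the stated recurrence. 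What is missing is the idea the paper actually uses at this point: reduce, via Theorem~\ref{thm: centeno2011} and Lemma~\ref{lema_de_los_extremos}, to a two-element hull set $H=\{u,v\}$ with a common neighbour and $\tau_H(G)=\tau_H(v_R)$, and then run a domination (comparison) argument — since $v_L\le u$ and $v_L{\uparrow}\le v$, one shows by induction that $R(I^k[\{v_L,v_L{\uparrow}\}])\le R(I^k[H])$ for every $k$, so the extreme pair is the slowest hull set and $\tau(G)=\tau(\{v_L,v_L{\uparrow}\})$ (symmetrically $\tau(\{v_R{\downarrow},v_R\})$). Without this comparison step, or some substitute handling arbitrary hull sets, your argument proves only $\tau(G)\ge\mathrm{diam}(G)$, not the equality claimed in the corollary.
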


\begin{proof}
Notice that $G$ is $2$-connected. Otherwise, $G$ would have a cut vertex $v$ and thus, since $G$ is chordal, $\{v\}=C\cap C'$ where $C$ and $C'$ are maximal cliques of $G$ (see~\cite{McKee}). In addition, two maximal cliques with nonempty intersection have at least two common vertices and $|V(G)|\ge 3$, the only maximal clique containg $v_L$ (resp. $v_R$) contains at least three vertices. Hence, by Theorem~\ref{thm: centeno2011}, $\{v_L,{\uparrow} v_L\}$ (resp. $\{{\downarrow} v_R,v_R\}$) is a hull set of $G$

Since $G$ is a $2$-connected graph, it is easy to see, by applying Theorem~\ref{thm: centeno2011}, that $\tau(G)=1$ if and only if $G$ is a complete graph. From now on, we will assume that $\tau(G)\ge 2$. Let $H$ be a hull set of $G$. We can assume by Theorem~\ref{thm: centeno2011} and Lemma~\ref{lema_de_los_extremos}, without loss of generality, that $H=\{u,v\}$ for two vertices $u$ and $v$ having a common neighbor, and $\tau(G)=\tau_H(G)=\tau_H(v_R)$. Assume, without loosing generality, that $u<v$, and thus $v_L\le u$ and $v_L{\uparrow}\le v$. Hence $R(I^1[\{v_L,v_L{\uparrow}\}])\le R(I^1[H])$. Consequently, it can be proved by induction that  $R(I^k[\{v_L,v_L{\uparrow}\}])\le R(I^k[H])$ for every $1\le k\le\tau(G)$. Thus $\tau(G)=\tau_{H}(v_R)\le \tau_{\{v_L,v_L{\uparrow}\}}(v_R)$. Therefore $\tau(G)=\tau(\{v_L,v_L{\uparrow}\})$. By symmetry, if $\tau(G)=\tau_H(G)=\tau_H(v_L)$, then $\tau(G)=\tau(\{v_R{\downarrow},v_R\})$.

Notice that $Q_k=[a_L(R(I^k[H]])),R(I^k[H])]$ is a maximal clique. Otherwise, would there exist a maximal clique $Q=[x,y]$ containing $Q_k$ with $R(I^k[H])<y$, since $Q_k$ contains at leas two vertices in $I^{k-1}[H]$, $y\in I^k[H]$. Since there is no two maximal cliques with nonempty intersection having at most one common vertex, $a_R(R[I^k[H]])=R[I^{k+1}[H]]$. Otherwise, would there exist a vertex $x$ adjacent to $R(I^k[H])$ and nonadjacent to $R(I^k[H]){\downarrow}$ and thus the maximal clique $[R[I^k[H]],a_R(R(I^k[H]))]$ would be a maximal clique having exactly one vertex in common with the maximal clique $[a_L(R(I^k[H]])),R(I^k[H])]$, which is precisely the vertex $R(I^k[H])$. By Corollary~\ref{Corolario-diametro}, the path $v_L,R(I^1[H]),\ldots,R(I^{\tau(G)}),v_R$ is a diameter path. By symmetry,  if $H'=\{v_R{\downarrow},v_R\}$, then the path $v_R,L(I^1[H']),\ldots,L(I^{\tau(G)}),v_L$ is also diameter path.  Finally, by Lemma~\ref{lema_de_los_extremos} and the previous discussion, we conclude that $\tau(G)=\mathrm{diam}(G)=\tau(\{v_L,v_L{\uparrow} \})=\tau(\{v_R{\downarrow},v_R\})$.
\end{proof}

From the first part of the proof of Corollary~\ref{dos-vertices-extremos} can be derived the following remark.

\begin{remark}\label{remark2}
	If $G$ is a $2$-connected unit interval graph such that $|V(G)|\ge 3$, then either $\tau(G)=\tau(\{v_L,v_L{\uparrow}\})$ or $\tau(G)=\tau_G(\{v_R{\downarrow},v_R\})$.
\end{remark}

\begin{theorem}\label{quliques-con-un-solo-vertices-en-comun}
If $G$ is a 2-connected unit interval graph with such that $|V(G)|\ge 3$, then there exists a 2-connected unit interval graph $G^*$ such that every two nonempty-intersection maximum clique has at least two common vertices and $\tau(G)=\tau(G^*)=\mathrm{diam}(G^*)$. Besides, $\tau(G)=\tau_G(\{v_1,v_2\})=\tau_G(\{v_{n-1},v_n\})$.
\end{theorem}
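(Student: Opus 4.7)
The strategy is to reduce the statement to Corollary~\ref{dos-vertices-extremos}, which already delivers the conclusion under the extra hypothesis that every two intersecting maximal cliques share at least two vertices. To apply that corollary when the hypothesis fails for $G$, one builds an auxiliary graph $G^*$ that does satisfy it and has the same percolation time as $G$.

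First, I fix a unit interval order $v_1 < v_2 < \cdots < v_n$ of $G$ and list the maximal cliques $C_1, C_2, \ldots, C_k$ in order of leftmost endpoint. Because $G$ is 2-connected, two consecutive maximal cliques $C_i, C_{i+1}$ must share at least two vertices: a unique shared vertex would be a cut vertex. Thus the only possible obstruction to applying Corollary~\ref{dos-vertices-extremos} to $G$ itself is that some non-consecutive pair $C_i, C_j$ (with $j \ge i+2$) intersects in exactly one vertex, a \emph{pivot} that is simultaneously the right end of $C_i$ and the left end of $C_j$; the graph $P_5^2$ already exhibits this obstruction.

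Next, for each such pivot I would insert one or more duplicate vertices into the unit interval order, with neighborhoods chosen so that (i) they enlarge the offending intersection to size at least two, (ii) the resulting graph $G^*$ remains 2-connected and unit interval, and (iii) each inserted vertex is percolated at exactly the same step as the pivot it accompanies. Property~(iii) is the crucial one: it guarantees that the rightmost-percolated-vertex sequence arising from $\{v_1,v_2\}$ in $G$ coincides, vertex by vertex, with the analogous sequence in $G^*$, so that $\tau_G(\{v_1,v_2\}) = \tau_{G^*}(\{v_1,v_2\})$, and symmetrically on the right. I expect (iii) to be the main technical obstacle of the whole proof: naive twin duplication, as one checks immediately on $P_5^2$, actually \emph{decreases} $\tau$ because the twin pair lets a distant vertex find two neighbors earlier than in $G$. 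The insertion therefore has to be done sparingly enough that no vertex beyond the pivot's original reach gains a new second neighbor.

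Once $G^*$ is in place, Corollary~\ref{dos-vertices-extremos} gives
\[
\tau(G^*) = \mathrm{diam}(G^*) = \tau_{G^*}(\{v^*_L, v^*_L{\uparrow}\}) = \tau_{G^*}(\{v^*_R{\downarrow}, v^*_R\}),
\]
and combined with $\tau(G) = \tau(G^*)$ this yields the first assertion $\tau(G)=\tau(G^*)=\mathrm{diam}(G^*)$. Since the construction inserts vertices only in the interior of the unit interval order, the two leftmost vertices of $G^*$ are $v_1,v_2$ and the two rightmost are $v_{n-1},v_n$; the step-by-step matching of percolations then gives $\tau_G(\{v_1,v_2\}) = \tau_{G^*}(\{v^*_L,v^*_L{\uparrow}\}) = \tau(G)$ and symmetrically $\tau_G(\{v_{n-1},v_n\}) = \tau(G)$, which together with Remark~\ref{remark2} establishes the ``besides'' assertion.
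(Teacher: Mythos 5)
Your overall strategy coincides with the paper's: locate the ``singular'' vertices (your pivots, i.e.\ vertices that are the exact one-point intersection of two maximal cliques), enlarge $G$ to a graph $G^*$ in which every two intersecting maximal cliques share two vertices while keeping $\tau$ unchanged, and then invoke Corollary~\ref{dos-vertices-extremos} together with Remark~\ref{remark2}. Your preliminary observations (consecutive maximal cliques of a $2$-connected unit interval graph already share two vertices, so only non-consecutive cliques can meet in a single pivot; a true-twin duplication is wrong, as $P_5^2$ shows) are correct and pertinent. However, as a proof the proposal stops exactly where the work begins: you say you ``would insert one or more duplicate vertices \dots with neighborhoods chosen so that (i)--(iii) hold'' and you yourself identify (iii) --- that the inserted vertex percolates in the same round as its pivot and that no vertex beyond the pivot gains a premature second neighbor --- as ``the main technical obstacle of the whole proof.'' No rule for the neighborhoods is given, and none of (i), (ii), (iii) is verified, so the conclusion $\tau(G)=\tau(G^*)=\mathrm{diam}(G^*)$ rests on an unconstructed object. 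Even the side claims you use at the end (that insertions occur only in the interior of the order, so the two leftmost and two rightmost vertices of $G^*$ are those of $G$, and that the ``rightmost-percolated-vertex sequences'' match step by step) are consequences of the unspecified construction, not facts established in the proposal.

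For comparison, the missing content is precisely what the paper's proof supplies: process the singular vertices from left to right, and at a singular vertex $w_h$ insert one new vertex whose clique membership is determined by the rule ``cliques with right endpoint at or before $w_h$ are unchanged, cliques properly crossing $w_h$ absorb the new vertex (their right endpoint moves one position to the right), and cliques starting at or after $w_h$ are shifted one position to the right.'' This is exactly the non-twin duplication your $P_5^2$ remark calls for: the duplicate is excluded from the cliques ending at the pivot, so no vertex to the right sees two percolated neighbors earlier than in $G$. One must then check (as the paper does) that $G^*$ is again a $2$-connected unit interval graph, that any two intersecting maximal cliques of $G^*$ share at least two vertices, and---the crux---that $I^k[\{v_1,v_2\}]$ in $G$ and $I^k[\{w_1,w_2\}]$ in $G^*$ reach the corresponding maximal cliques at the same step $k$, which gives $\tau_G(\{v_1,v_2\})=\tau_{G^*}(\{w_1,w_2\})$ and, by the symmetric argument on right endpoints, $\tau_G(\{v_{n-1},v_n\})=\tau_{G^*}(\{w_{n+m-1},w_{n+m}\})$; only then do Corollary~\ref{dos-vertices-extremos} and Remark~\ref{remark2} close the argument. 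Until you specify the insertion rule and prove this step-by-step correspondence, the proposal is a plan rather than a proof.
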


\begin{proof}
 Since $G$ is $2$-connected and does not contain any induced cycle with at least four vertices, every maximal clique of $G$ has at least three vertices. Indeed, if $C=[a,b]$ is clique with two vertices, then $ab$ is a bridge of $G$, an edge whose removal disconnect the graph, an thus, since $|V(G)|\ge 3$, $a$ or $b$ is a cut vertex, contradicting that $G$ is $2$-connected. Hence,  $ab$ is and edge of a complete graph on three vertices. Let $v_1,v_2,\ldots,v_n$ be a unit interval order for $V(G)$. Recall that a maximal clique of $G$ is a maximal interval of pairwise adjacent vertices. Besides, two vertices are adjacent if and only if they belong to at least one of these maximal intervals of pairwise adjacent vertices. In other words, these intervals define the adjacencies of $G$, once the unit interval order of $|V(G)|$ was established. We are going to define the maximal intervals of a graph $G^*$ obtained from $G$ by properly adding some vertices without modifying the relative order of those vertices in $V(G)$.

First, set $w_i=v_i$ for each $1\le i\le n$.  We will traverse the vertices of $G$ from $w_1$ to $w_n$. Set $G=G_1$; and any time we find a vertex $w_h$ such that $\{w_h\}=C\cap C'$, where $C$ and $C'$ are maximal cliques of $G_i$, apply the following transformation; we will call such vertices singular vertices. Add a vertex $w_{n+k}$ at the end, where  $k-1$ stands for the number of singular vertices  previously processed, one for each graph $G_j$ for every $1\le j\le i-1$, and replace each clique $C=[a,b]$ by $C'=[a,b]$ if $b\le w_h$, by $C'=[a,b{\uparrow}]$ if $a< w_h< b$, by $C'=[a{\uparrow},b{\uparrow}]$ if $a\ge w_h$.  Notice that if $b=w_{n+k-1}$, then $b{\uparrow}=w_{n+k}$. The new cliques define a unit interval graph $G^*$ with vertex set $\{v_1\ldots,v_{n+m}\}$, where $m$ is the number singular vertices of $G$. Notice that, the number of maximal cliques of $G$ and $G^*$ agree. If $C_1,\ldots, C_r$ denote the set of maximal cliques of $G$, we denote by $C_1^*,\ldots,C_r^*$ the maximal cliques obtained by the transformation of $G$ into $G^*$, where $C_i^*$ is the clique corresponding to $C_i$ for every $1\le i\le k$, under the previously described transformation. Besides, the relative order of the left endpoints (resp. right endpoints) of the maximal cliques of $G$ is not modified when transforming $G$ into $G^*$, and $C_i^*\cap C_j^*\neq\emptyset$ if and only if $|C_i\cap C_j|\ge 2$.

It is easy to prove that two nonempty intersecting cliques in $G^*$ has at least two vertices in common. In addition, $G^*$ is $2$-connected. Since each pair of intersecting cliques has at least two vertices in common, it suffices to prove that $G^*$ is connected. We are going to proceed to prove it by induction. Recall that, if $G^*$ has a cut vertex $v$, since $G^*$ is chordal~\cite{McKee}, then $\{v\}=C\cap C'$ where $C$ and $C'$ are maximal cliques of $G^*$. Suppose, towards a contradiction, that $G^*$ is disconnected. Hence there exists a positive integer $i$ such that $1< i< n+m$ in $G^*$, such that every vertex $v$ of $G^*$ such that $v\le w_i$ is nonadjacent to every vertex  $w$ of $G^*$ such that $w_{i+1}\le w$. Thus there exists an index $j$ such that the maximal clique $[w_{i+1},a_R(w_{i+1})]$ of $G_{j+1}$ comes from the maximal clique $[w_i,a_R(w_i)]$ and thus $w_{i+1}=w_i{\uparrow}$, $a_R(w_{i+1})=a_R(w_i){\uparrow}$ and $w_i$ is a singular vertex of $G_j$. Since  $G_j$ is $2$-connected, by inductive hypothesis, and thus $w_i$ is not a cut vertex of $G_j$, there exists a maximal clique $[x,y]$ in $G_j$ such that $a_L(w_j)<x<w_j$ and $w_j<y<a_R(w_j)$. By construction, $x<w_j$ in $G_{j+1}$, $w_{j+1}<y{\uparrow}$ in $G_{j+1}$, and $x$ is adjacent to $y{\uparrow}$ in $G_{j+1}$, a contradiction. Therefore, $G_{j+1}$ is connected.

It remains to prove that $\tau(G^*)=\tau(G)$. We consider $C_1,\ldots,C_s$ and $C_1^*,\ldots,C_s^*$ the maximal of $G$ and $G^*$ respectively ordered by their left endpoints. Since $G$ and $G^*$ are $2$-connected, it suffices to prove that $\tau_{G^*}(\{w_1,w_2\})=\tau_G(\{v_1,v_2\})=\tau_G(\{v_{n-1},v_n\})=\tau_{G^*}(w_{n+m-1},w_m)$ (see Corollary~\ref{dos-vertices-extremos} and Remark~\ref{remark2}). In order to prove it, we have to note that $I^k[\{v_1,v_2\}]=I[[v_1,v_{k-1}]]=[v_1,v_k]$ and  $I^k[\{w_1,w_2\}]=I[[w,w_{k-1}]]=[w_1,w_k]$ , where $v_k=a_R(v_{k-1}{\downarrow})$ and $w_k=a_R(w_k)=a_R(w_{k-1})$, and in addition, there exists an integer $i$ such that $1< i\le s$ and  if $C$ and $C^*$ are the maximal cliques of $G$ and $G^*$ respectively, containing $a_R(v_{k-1}{\downarrow})$ and $a_R(w_k)$ as their leftmost endpoint, then $C=C_i$ and $C^*=C_i^*$.  Therefore, $\tau_{G^*}(\{w_1,w_2\})=\tau_G(\{v_1,v_2\})$. Recall that the relative order of the right endpoints and the left endpoints of the corresponding maximal cliques of $G^*$ did not suffer any modifications respect to the maximal cliques of $G$. Therefore, by symmetry, $\tau_G(\{v_{n-1},v_n\})=\tau_{G^*}(\{w_{n+m-1},w_{n+m}\})$.
\end{proof}

Now we have to consider the case in which $G$ has at least one cut vertex.

\begin{prop}\label{vertices-de-corte}
Let $G$ be a connected graph. If $C_1,\ldots,C_r$ are the $2$-connected components of $G$, then $\tau(G)\le\sum_{i=1}^r \tau(C_i)$.
\end{prop}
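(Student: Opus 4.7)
The plan is to argue by induction on $r$. The base case $r=1$ is immediate since then $G = C_1$ is itself $2$-connected and the inequality $\tau(G)\le\tau(C_1)$ is trivial. For $r\ge 2$, the block-cut tree of $G$ has at least two block nodes and hence at least one leaf block; call it $C=C_r$ and let $c\in V(C)$ be its unique cut vertex in $G$. Set $G' = G - (V(C)\setminus\{c\})$, so that $G'$ is connected and has the remaining $r-1$ blocks $C_1,\ldots,C_{r-1}$. By the inductive hypothesis, $\tau(G')\le\sum_{i=1}^{r-1}\tau(C_i)$, and it therefore suffices to establish the inequality $\tau(G)\le\tau(G')+\tau(C)$.

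Fix a hull set $R$ of $G$ realizing $\tau_R(G)=\tau(G)$, and set $R' = R\cap V(G')$ and $R_C = R\cap V(C)$. The key structural fact is that $c$ is the only vertex shared between $V(G')$ and $V(C)$; consequently, every $v\in V(G')\setminus\{c\}$ has $N_G(v)\subseteq V(G')$, and symmetrically every $v\in V(C)\setminus\{c\}$ has $N_G(v)\subseteq V(C)$. A routine induction on the time step then yields two hull-set claims: $R'\cup\{c\}$ is a hull set of $G'$ and $R_C\cup\{c\}$ is a hull set of $C$. Indeed, the $G'$-percolation from $R'\cup\{c\}$ dominates, at every time step, the $G$-percolation from $R$ restricted to $V(G')$, because preactivating $c$ can only accelerate activations in $V(G')$; since $R$ percolates $V(G')$ in $G$, one concludes that $R'\cup\{c\}$ percolates $V(G')$ in $G'$, and similarly on the $C$-side.

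Let $T_c = \tau_R(c)$. An entirely analogous comparison argument, now applied from time $T_c$ onwards (when $c$ has just become active in the $G$-percolation), shows that the $G$-percolation restricted to $V(G')$ from time $T_c$ dominates the $G'$-percolation from $R'\cup\{c\}$. This gives the bounds $\tau_R(v)\le T_c+\tau(G')$ for every $v\in V(G')$ and $\tau_R(v)\le T_c+\tau(C)$ for every $v\in V(C)$. Upgrading these into the desired $\tau_R(v)\le\tau(G')+\tau(C)$ will follow once one establishes the bound $T_c\le\min\{\tau(G'),\tau(C)\}$.

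The bound on $T_c$ is the main obstacle. If $c\in R$ then $T_c=0$ and we are done. Otherwise, I would split into cases according to how $c$ first acquires two active neighbors at time $T_c$: both on the $G'$-side, both on the $C$-side, or one on each side. In the first two cases a single-block comparison with the $G'$-percolation from $R'\cup\{c\}$ (resp.\ the $C$-percolation from $R_C\cup\{c\}$) gives directly $T_c\le\tau(G')$ or $T_c\le\tau(C)$. The mixed case is the most delicate: the single activating neighbor on each side must already be produced by the corresponding sub-percolation from $R'$ alone (resp.\ $R_C$ alone), because before $c$ activates the $G$-percolation restricted to $V(G')\setminus\{c\}$ and to $V(C)\setminus\{c\}$ coincide respectively with those sub-percolations, and a further comparison argument with the hull-set sub-percolations from $R'\cup\{c\}$ and $R_C\cup\{c\}$ is needed to bound these two activation times by $\tau(G')$ and $\tau(C)$ respectively. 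This last subcase is where I expect the real technical work to lie, since activation times inside a sub-percolation whose starting set is not itself a hull set are not a priori bounded by the percolation time of the ambient block.
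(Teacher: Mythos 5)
Your route (induction on the block tree via a leaf block $C$ with cut vertex $c$, reducing to $\tau(G)\le\tau(G')+\tau(C)$) is genuinely different from the paper's, which argues directly for an arbitrary hull set $S$ of $G$ by recording, for each block $C_i$, the first time $s_i$ at which the trace of the percolated set on $C_i$ becomes a hull set of $C_i$ and the time $t_i$ at which $C_i$ is filled, and then chaining $\tau(G)\le\sum_i(t_i-s_i+1)\le\sum_i\tau(C_i)$. The problem is that your reduction cannot be completed as set up. Beyond the mixed case you flag, the inequality your accounting requires, $T_c\le\min\{\tau(G'),\tau(C)\}$, is simply false: your argument uses no property of $R$ other than being a hull set, and already for the graph $G'$ on $\{1,\dots,n\}$ with $i$ adjacent to $j$ iff $|i-j|\le 2$, with a triangle $C=\{c,x,y\}$ glued at $c=n$, the hull set $R=\{1,2,x\}$ of $G$ has $T_c=\tau_R(c)$ of order $n$ while $\tau(C)=1$. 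Maximality of $R$ does not rescue this: extremal sets want $c$ to be activated late (that is what delays $y$), so the violation is typical of the sets realizing $\tau(G)$. The underlying flaw is that your bound $\tau_R(v)\le T_c+\tau(G')$ for $v\in V(G')$ restarts the $G'$-side clock at time $T_c$ even when the $G'$-side percolation has been running since time $0$ and never needed $c$ at all; pairing it with $T_c\le\tau(C)$ is then both necessary for your bookkeeping and unattainable.

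Moreover, even the two cases you describe as ``direct'' are not: before $c$ becomes active, the $G$-process restricted to $V(G')\setminus\{c\}$ (resp.\ $V(C)\setminus\{c\}$) coincides with the percolation in $G'$ from $R'$ alone (resp.\ in $C$ from $R_C$ alone), and these sets need not be hull sets of the block; monotonicity only says that the process from $R'\cup\{c\}$ is at least as fast, which is the wrong direction, so comparing with it does not bound $T_c$ by $\tau(G')$ or $\tau(C)$. In other words, the obstruction you correctly identify in the mixed case (``activation times inside a sub-percolation whose starting set is not itself a hull set are not a priori bounded by the percolation time of the ambient block'') is present in all three cases. A correct argument has to separate, within each block, the waiting phase before the block's trace is a hull set of the block from the filling phase afterwards, and to account only for the filling phases (which is what bounds each contribution by $\tau(C_i)$) while letting the waiting phases be absorbed by the other blocks' filling phases; this is exactly the $s_i$/$t_i$ device in the paper's proof, and it replaces both your false intermediate bound and the unproved subcase.
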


\begin{proof}
Let $S$ be a hull set of $G$. Denote by $s_i$ the  minimum $h>0$  such that $V(C_i)\cap V_h[S]$ is a hull set of $C_i$; and denote by $t_i$ the minimum $h$ such that $V(C_i)\subseteq I^h[S]$. 
\[\tau(G)\le \sum_{i=1}^r (t_i-s_i+1)\le \sum_{i=1}^r \tau(C_i).\qedhere \]  
\end{proof}

\begin{theorem}\label{thm: percol-sin-vertexcut of degree two}
If $G$ is a connected unit interval graph with no cut vertex of degree $2$ and no vertex of degree $1$, then $\tau(G)=\sum_{i=1}^r\mathrm{diam}(C_i^*)$. In addition, $\tau(G)=\tau(v_L)=\tau(v_R)$
\end{theorem}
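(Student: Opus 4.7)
The plan is to prove the equality in two halves. For the upper bound, I would invoke Proposition~\ref{vertices-de-corte} to obtain $\tau(G)\le \sum_{i=1}^r \tau(C_i)$ and then apply Theorem~\ref{quliques-con-un-solo-vertices-en-comun} block by block to replace each $\tau(C_i)$ by $\mathrm{diam}(C_i^*)$. This last step requires verifying that each $C_i$ has at least three vertices (so Theorem~\ref{quliques-con-un-solo-vertices-en-comun} applies): a bridge-block $\{u,v\}$ would force $u$ or $v$ either to be pendant in $G$ or to be a cut vertex of degree $2$, both of which are excluded by hypothesis. Hence $\tau(G)\le \sum_{i=1}^r \mathrm{diam}(C_i^*)$.

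For the matching lower bound, I would exhibit a hull set $H$ whose percolation realizes the sum. Order the blocks as $C_1,\ldots,C_r$ along the unit interval order of $G$ and let $x_i\in V(C_i)\cap V(C_{i+1})$ denote the cut vertex that joins consecutive blocks. Define
\[ H=\{v_L,v_L\uparrow\}\cup\{x_1\uparrow,x_2\uparrow,\ldots,x_{r-1}\uparrow\}, \]
so that $H$ contributes exactly two adjacent vertices to $C_1$ and a single seed $s_{i+1}=x_i\uparrow$ to each subsequent $C_{i+1}$. The core claim, proved by induction on $i$, is that for every $i$ the cut vertex $x_i$ is activated exactly at step $T_i:=\sum_{j=1}^{i}\mathrm{diam}(C_j^*)$ and no earlier. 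The base case follows from Theorem~\ref{quliques-con-un-solo-vertices-en-comun} applied inside the $2$-connected block $C_1$ with hull set $\{v_L,v_L\uparrow\}$. For the inductive step, note that before $x_i$ becomes active, the only active vertex of $C_{i+1}$ is the seed $s_{i+1}$, which alone cannot propagate percolation inside $C_{i+1}$; once $x_i$ turns on, the pair $\{x_i,x_i\uparrow\}$ plays the role of $\{v_L,v_L\uparrow\}$ inside $C_{i+1}$, so by Theorem~\ref{quliques-con-un-solo-vertices-en-comun} the rightmost vertex of $C_{i+1}$, which is $x_{i+1}$ (or $v_R$ when $i+1=r$), is reached in $\mathrm{diam}(C_{i+1}^*)$ additional steps. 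Summing yields $\tau_H(v_R)\ge T_r$, matching the upper bound.

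The main obstacle is ensuring that the seeds placed in distant blocks do not interact prematurely with the advancing percolation front or with one another, potentially shortening the process. The essential ingredient is the observation that every vertex of $C_i$ other than the two cut vertices $x_{i-1},x_i$ has all its neighbors inside $C_i$, so activity in block $C_j$ for $j\neq i$ can only reach $C_i$ through the cut vertices $x_{i-1}$ or $x_i$; combined with the fact that a single vertex never percolates any of its neighbors, this confines the seed $s_i$ to being inert until the percolation front arrives at $x_{i-1}$. Finally, the additional identity $\tau(G)=\tau(v_L)=\tau(v_R)$ is obtained by symmetry: the construction above shows $\tau(G)=\tau_H(v_R)$, while the mirror construction with hull set $\{v_R\downarrow,v_R\}\cup\{x_{r-1}\downarrow,\ldots,x_1\downarrow\}$ shows $\tau(G)=\tau_{H'}(v_L)$, so the worst-case percolation time is always realized at one of the two extreme vertices under the unit interval order.
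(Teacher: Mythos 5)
Your outline follows the paper's own route (the hull set $\{v_L,v_L{\uparrow}\}\cup\{x_1{\uparrow},\ldots,x_{r-1}{\uparrow}\}$ is exactly the set $S$ used there, and the upper bound via Proposition~\ref{vertices-de-corte} plus Theorem~\ref{quliques-con-un-solo-vertices-en-comun} is the same), but the crucial step of your lower bound is not justified and in fact fails. Your ``core claim'' is that the cut vertex $x_i$ is activated exactly at step $T_i=\sum_{j\le i}\mathrm{diam}(C_j^*)$ and no earlier, and your confinement argument only shows that the seed $s_{i+1}=x_i{\uparrow}$ cannot start percolation \emph{inside} $C_{i+1}$ before $x_i$ is active. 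It does not rule out the real problem: $s_{i+1}$ is adjacent to $x_i$ itself, so once a \emph{single} neighbor of $x_i$ inside $C_i$ becomes active, $x_i$ already has two active neighbors and fires -- possibly strictly before the time at which $\{v_L,v_L{\uparrow}\}$ would percolate $x_i$ within $C_i$ alone. Concretely, take $G$ on $\{1,\ldots,7\}$ with maximal cliques $\{1,2,3\}$, $\{2,3,4\}$, $\{4,5,6\}$, $\{5,6,7\}$ (two copies of $K_4$ minus an edge glued at the cut vertex $4$, which has degree $4$; no pendant vertices, no degree-$2$ cut vertices). Here $\mathrm{diam}(C_1^*)+\mathrm{diam}(C_2^*)=2+2=4$, but your hull set is $\{1,2,5\}$ and it percolates in $3$ steps: $4$ is adjacent to the two seeds $2$ and $5$ and fires at step $1$, not at step $2$. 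So the induction ``$x_i$ fires exactly at $T_i$'' breaks at the very first cut vertex, and the claimed additivity $\tau_H(v_R)=T_r$ does not hold for this $H$; a short case analysis shows no hull set of this $G$ reaches $4$, so the gap cannot be repaired merely by choosing a different seed in $C_2$. (The paper's proof asserts the same additivity with ``it is easy to see'' and is vulnerable at exactly this point; you have not supplied the missing argument.)

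A second, smaller error: your justification that every block has at least three vertices is false. A bridge whose two endpoints are cut vertices of degree at least $3$ is compatible with the hypotheses -- e.g.\ two triangles joined by an edge is a unit interval graph with no pendant vertex and no degree-$2$ cut vertex, yet it has a $K_2$ block. For such a block Theorem~\ref{quliques-con-un-solo-vertices-en-comun} cannot be invoked ($\tau(K_2)=0$ while the intended contribution is $1$), your seed $x_i{\uparrow}$ coincides with the next cut vertex, and even the bound of Proposition~\ref{vertices-de-corte} becomes delicate there. So both halves of your argument need genuine additional ideas to handle the interaction at cut vertices, which is precisely the content the theorem is supposed to capture.
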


\begin{proof}
Since $G$ has no vetices of degree $1$, $d(v_1)\ge 2$ and $d(v_n)\ge 2$. Each $2$-connected component is an interval of consecutive vertices in the unit interval order $v_1,\ldots,v_n$ of $G$, where both endpoints are cut vertices but $v_1$ and $v_n$; i.e., $V(C_i)=[v_{m_{i-1}},v_{m_i}]$ for every $1\le i\le r$, where $m_i=\sum_{j=1}^i\vert V(C_j)\vert-i+1$ ($m_0=1$). Since $G$ has no cut vertex of degree $2$, it is easy to see, using Theorem~\ref{quliques-con-un-solo-vertices-en-comun}, that $S=\{v_1,v_2,v_{m_1+1},\cdots,v_{m_{r-1}+1}\}$ (resp. $\{v_n,v_{n-1},v_{m_{r-1}-1},\ldots,v_{m_1-1}\}$) is a hull set, and $\tau_G(S)=\sum_{i=1}^r \tau(C_i)=\tau(v_n)$ (resp. $\tau_G(S)=\sum_{i=1}^r \tau(C_i)=\tau(v_1)$). Therefore, by  Proposition~\ref{vertices-de-corte}, $\tau(G)=\sum_{i=1}^r\mathrm{diam}(C_i^*)=\tau(v_L)=\tau(v_R)$.
\end{proof}
%
%
%
%
Let $G$ be a connected unit interval graph whose $2$-connected component are $G_1,\ldots,G_r$ and $x\le y$ for every $x\in V(G_i)$ and $y\in V(G_j)$ such that $1\le i< j\le r$, we define $G^*$ as the unit interval graphs whose $2$-connected components are $G_1^*,\ldots,G_r^*$ and $x\le y$ for every $x\in V(G_i^*)$ and $y\in V(G_j^*)$ such that $1\le i< j\le r$. Notice that in both cases the equality holds when $x=y$ is a cut vertex.  In addition, if $G$ has no vertex of degree $1$ and it does not have cut vertices of degree $2$, by Theorem~\ref{thm: percol-sin-vertexcut of degree two}, $\tau(G)=\tau(G^*)=\sum_{i=1}^r\mathrm{diam}(G_i^*)$.

Let $G$ be a connected unit interval graph with at least three vertices having a unit interval order of its vertices $v_1,\ldots,v_n$. If $u$ is a vertex of degree $1$, since $G$ is connected, $v=v_1$ or $v=v_n$. Besides, if $u$ is a cut vertex of degree $2$ whose only neighbors are $v$ and $w$, then $v$ is nonadjacent $w$ and  either $v<u<w$ or $w<u<v$. We will call an \emph{special subgraph} to a graph induced subgraph by those vertices in an maximal interval $[a,b]$ such that $2<d(v)$ for every cut vertex $v$ of $G$ such that $a<v<b$ and also $a$ or $b$ belongs to $\{v_1,v_n\}$, or it is a cut vertex of degree $2$ of $G$. Such interval $[a,b]$ will be called \emph{special interval}.

If $H$ is an special subgraph of $G$ induced by $[a,b]$ we define $t(H)$ as follows: 
\begin{itemize}
	\item $t(H)=1$, if $|V(H)|= 2$.
	\item $t(H)=\tau(H^*)$, if $v_1=a$, $v_n=b$, $2\le d(v_1)$ and $2\le d(v_n)$.
	\item $t(H)=\tau((H-a)^*)+1$, if [($v_1=a$ and $d(v_1)=1$) or $v_1<a$], $v_n=b$,  and $2\le d(v_n)$.
	\item $t(H)=\tau((H-b)^*)+1$, if $v_1=a$, $2\le d(v_1)$ and [($d(v_n)=1$ and $v_n=b$) or $b<v_n$].
	\item $t(H)=\tau((H-\{a,b\})^*)+1$, if $v_1=a$, $v_n=b$, $d(v_1)=d(v_n)=1$.
	\item $t(H)=\tau((H-\{a,b\})^*)+2$ if ($v_1<a$, $v_n=b$, and $d(v_n)=1$), or ($v_1=a$, $b<v_n$, and $d(v_1)=1$), or ($v_1<a$ and $b<v_n$).
\end{itemize} 
 We define $\epsilon(H)$ as the maximum $t(H)$ among all the special subgraphs $H$ of $G$.
\begin{theorem}
If $G$ is a connected unit interval graph such that $|V(G)|\ge 3$, then $\tau(G)=\epsilon(G)$. 
\end{theorem}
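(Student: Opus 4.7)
I would prove the equality in two directions, $\tau(G)\le\epsilon(G)$ and $\tau(G)\ge\epsilon(G)$, after first clarifying how the special subgraphs partition $G$. Fix a unit interval order $v_1<\cdots<v_n$. The set of cut vertices of degree $2$ together with the (at most two) pendant vertices $v_1,v_n$ split the order into the list of maximal special intervals $[a_1,b_1],\ldots,[a_s,b_s]$, with $b_i=a_{i+1}$ for $1\le i<s$, and every pair of consecutive special subgraphs $H_i,H_{i+1}$ shares exactly this single vertex. By Remark~\ref{remark1}, for every hull set $S$ of $G$ each shared vertex either lies in $S$ or is percolated only once \emph{both} of its two neighbors (one on each side of the cut) have been percolated. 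This is the mechanism that decouples the percolation inside different special subgraphs, and it is the backbone of both inequalities.

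For the lower bound $\tau(G)\ge\epsilon(G)$, I would pick a special subgraph $H$ with $t(H)=\epsilon(G)$ and exhibit a hull set $S^\star$ realizing $\tau_{S^\star}(G)\ge t(H)$. Depending on which clause of the definition of $t(H)$ applies, the graph to be worked inside is $H^*$, $(H-a)^*$, $(H-b)^*$ or $(H-\{a,b\})^*$. On the designated $2$-connected piece I would take the two consecutive vertices at one extreme that witness the diameter, exactly as in Corollary~\ref{dos-vertices-extremos} (and in the decomposition argument of Theorem~\ref{thm: percol-sin-vertexcut of degree two}); on every other special subgraph I would put enough vertices (for instance both extreme vertices, together with all pendants) to force the subgraph to be percolated by time $1$. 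The design ensures that the vertex of $H$ farthest from the chosen pair takes exactly the promised number of steps: the base contribution $\tau((\cdot)^*)=\mathrm{diam}((\cdot)^*)$ comes from inside the $2$-connected piece, each $+1$ in the formula pays for the step needed to bring a boundary cut vertex of degree $2$ (or a pendant-support configuration) into the percolated set, and the $+2$ case arises when both sides of $H$ contribute such a delay simultaneously.

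For the upper bound $\tau(G)\le\epsilon(G)$, I would take an arbitrary hull set $S$ and an arbitrary vertex $v\in V(G)$, let $H$ be the special subgraph containing $v$, and bound $\tau_S(v)\le t(H)$. The key step is to show that if $a,b$ are the boundary vertices of $H$ and if they are percolated by times $\tau_S(a)$ and $\tau_S(b)$ respectively, then inside the $2$-connected piece associated with $H$ the vertex $v$ is percolated at time at most $\max\{\tau_S(a),\tau_S(b)\}+\mathrm{diam}((\cdot)^*)$; this follows by restricting $S$ to $V(H)$, augmenting it with $\{a,b\}$ once they percolate, and invoking Corollary~\ref{dos-vertices-extremos} together with Theorem~\ref{quliques-con-un-solo-vertices-en-comun} on the resulting starred graph. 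The recursive bound $\tau_S(a),\tau_S(b)\le 1$ (respectively $2$) when the neighbor on the other side is a pendant (respectively a degree-$2$ cut vertex feeding another special subgraph) then matches clause by clause the extra additive constants in the definition of $t(H)$.

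The main obstacle is the bookkeeping in the $+2$ clause, where $a$ and $b$ are \emph{both} boundary cut vertices of degree $2$ (or pendants attached to supports $a{\uparrow},b{\downarrow}$). One must check that the two boundary delays genuinely accumulate, and do not cancel against the inner percolation of $(H-\{a,b\})^*$: the issue is that the inner starred graph is computed after removing $a$ and $b$, so its diameter no longer ``sees'' the extra steps, and one has to verify both that (i) the extremal hull-set construction from Corollary~\ref{dos-vertices-extremos} can be realized using $a{\uparrow},(a{\uparrow}){\uparrow}$ (or the corresponding pair near $b$) without shortening the percolation, and (ii) on the upper-bound side, the boundary neighbors inside $H$ percolate one step later than $a$ and $b$ themselves, not earlier. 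Handling this coupling carefully, together with the monotonicity $\tau_{S'}(v)\le\tau_S(v)$ whenever $S\subseteq S'$, would close the argument.
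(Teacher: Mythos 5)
Your overall skeleton is the paper's: split the order at the pendant ends and the cut vertices of degree $2$ into special subgraphs, handle each piece through the starred graph via Corollary~\ref{dos-vertices-extremos}, Theorem~\ref{quliques-con-un-solo-vertices-en-comun} and Theorem~\ref{thm: percol-sin-vertexcut of degree two}, and account for boundary delays; the lower-bound half of your plan is in order. The genuine gap is in your upper bound. You want $\tau_S(v)\le\max\{\tau_S(a),\tau_S(b)\}+\mathrm{diam}((\cdot)^*)$ together with the ``recursive bound'' $\tau_S(a),\tau_S(b)\le 1$ (resp.\ $2$). Both halves fail. If $a$ is a boundary cut vertex of degree $2$ with $a\notin S$, then $\tau_S(a)$ is governed by the neighbouring special subgraph and can be arbitrarily large (put a long $2$-connected block on the other side of $a$, seeded only at its far end), so the constants $1$ and $2$ are simply not available; and the displayed inequality itself is unjustified, since $\{a,b\}$ does not percolate $H$ (each of $a,b$ has a unique neighbour inside $H$), so ``once $a,b$ are infected wait $\mathrm{diam}$ steps'' is not a valid estimate --- and even if it were, chaining it across consecutive special subgraphs would give a sum of the $t(H_i)$ rather than their maximum. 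What makes the upper bound work is the decoupling you state in your first paragraph but never use where it is needed: by Remark~\ref{remark1}, either $a\in S$ (and then the interior start-up costs exactly one extra step, via $a$ together with one interior seed), or $a\notin S$ and $S$ restricted to the component of $G-a$ containing $H-a$ is already a hull set of it, so the interior of $H$ percolates within $\tau((H-a)^*)$, resp.\ $\tau((H-\{a,b\})^*)$, steps no matter how late $a$ becomes infected; the lateness of $a$ only matters for $a$ itself. This is exactly how the paper argues.

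Relatedly, your bookkeeping heuristic (``$+2$ when both sides contribute a delay'') does not reproduce the formula you are trying to prove: when $a=v_1$ and $b=v_n$ are both pendant, the definition gives $t(H)=\tau((H-\{a,b\})^*)+1$, not $+2$, because pendant vertices lie in every hull set and each end only delays the start by one step, which does not accumulate in a maximum taken over vertices. The $+2$ clauses are precisely those where a boundary cut vertex of degree $2$ is charged to $H$: its interior neighbour may be infected as late as $\tau(\cdot^*)+1$, and the cut vertex itself one step later still. So the clause-by-clause matching you defer to the end would not close as planned; you need to separate the bound for interior vertices of $H$ (at most $\tau(\cdot^*)+1$) from the bound for the boundary vertices themselves, as the paper does via $\tau(v)=\max\{\tau(v{\downarrow}),\tau(v{\uparrow})\}$ for cut vertices of degree $2$.
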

\begin{proof}
Suppose first that $G$ has no cut vertex of degree $2$. If $G$ has no vertex of degree $1$, the result follows by Theorem~\ref{thm: percol-sin-vertexcut of degree two}. Suppose now that $G$ has at least one vertex of degree $1$. Such a vertex could be either $v_1$ or $v_n$. Assume that $d(v_1)=1$ and $d(v_n)\ge 2$. Hence $v_1\in S$ for every hull set $S$ of $G$. Since $G-v_1$ is a $2$ connected unit interval graph and it has neither cut vertex of degree $2$ no vertex of degree $1$, if $S-v_1$ is a hull set of $G-v_1$, then $\tau_S(G)= \tau_{S-v_1}(G-v_1)\le\tau((G-v_1)^*)<t(G)$. Assume that $S-v_1$ is not a hull set of $G-v_1$. Thus there exist a vertex $u\in V(G-v_1)$ such that $v_1$ and $u$ has a common neighbor in $G$, this vertex only can be $v_2$; i.e, $u=v_2$, which implies that $\{v_2,v_3\}\subseteq I_{P_3}^1[S]$. Consequently, by Theorem~\ref{thm: percol-sin-vertexcut of degree two}, $\tau_S(G)\le \tau((G-v_1)^*)+1=t(G)$. Since $S=\{v_1,v_3\}$ is a hull set and $I_{P_3}^1[S]-\{v_1\}=\{v_2,v_3\}$, it follows that $\tau_S(G)=\tau((G-v_1)^*)+1$. Therefore, $\tau(G)=t(G)$.  Symmetrically, if $d(v_1)\ge 2$ and $d(v_n)=1$, then $\tau(G)=t(G)$. Following the same line of argumentation, it can be proved that the result also holds if $d(v_1)=d(v_n)=1$. That is why the details are omitted.

Suppose now that $G$ has at least one cut vertex $v$ of degree $2$ and let $[a,b]$  a special interval such that $|[a,b]|\ge 3$ and $H=G[[a,b]]$. Assume that one of $a$ and $b$ is a cut vertex of degree $2$ in $G$, say $a$. Using the similar techniques to those of the above paragraph, it can be proved that: 
\begin{itemize}
	\item $\tau(H-a)=\max\{\tau_G(a{\uparrow}),\tau_G(b)\}=\max\{\tau((H-a)^*),\tau((H-a)^*)+1\}=\tau((H-a)^*)+1$, whenever $d(b)\ge 2$,
	\item $\tau(H-\{a,b\})=\max\{\tau_G(a{\uparrow}),\tau_G(b{\downarrow})\}=\max\{\tau((H-\{a,b\})^*)+1,\tau((H-\{a,b\})^*)+1\}=\tau((H-\{a,b\})^*)+1$, whenever $d(b)=1$ or $b$ is cut vertex of degree $2$.
\end{itemize}

Analogously, if $b$ is a cut vertex of degree $2$, then the following two conditions hold.

\begin{itemize}
	\item $\tau(H-b)=\max\{\tau_G(a),\tau_G(b{\downarrow})\}=\max\{\tau((H-b)^*)+1,\tau((H-b)^*)\}=\tau((H-b)^*)+1$, whenever $d(a)\ge 2$,
	\item $\tau(H-\{a,b\})=\max\{\tau_G(a{\uparrow}),\tau_G(b{\downarrow})\}=\max\{\tau((H-\{a,b\})^*)+1,\tau((H-\{a,b\})^*)+1\}=\tau((H-\{a,b\})^*)+1$, whenever $d(a)=1$ or $a$ is a cut vertex of degree $2$.
\end{itemize}

The result follows by combining this facts with the following: if $v$ is a cut vertex of $G$ such that $d(v)=2$ then $\tau(v)=\max\{\tau(v{\downarrow}),\tau(v{\uparrow})\}$.   
\end{proof}

\section{A special graph class}\label{sec:special graph}


A graph $G$ satisfies the property $\mathcal P$ if $I^{2}_{P_3}[S]=I_{P_3}[S]$ for every set $S \subset V(G)$. It is easy to see that this property is hereditary and every graph $G$ belonging to this class satisfy $\tau(G)\le 1$. The next result characterizes, by minimal forbidden induced subgraphs, those graphs satisfying property $\mathcal P$.

\begin{theorem}\label{thm: special class}
Let $G$ be a graph. Then, $G$ satisfies the property $\mathcal P$ if and only if it does not contain as induced subgraph any graph depicted in Figure~\ref{fig:prohibited_graph}.
\end{theorem}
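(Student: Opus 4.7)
My plan is to prove the two directions separately, using the paper's observation that property $\mathcal P$ is hereditary.

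For necessity, assume $G$ contains some graph $H$ from Figure~\ref{fig:prohibited_graph} as an induced subgraph. By a finite case-by-case check on the figure, I exhibit for each such $H$ a specific witness: a set $S_H\subsetneq V(H)$ and a vertex $v_H\in V(H)$ such that $v_H\in I^2_{P_3}[S_H]\setminus I_{P_3}[S_H]$. Thus $H$ itself fails $\mathcal P$, and since $\mathcal P$ is hereditary the same holds for $G$.

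For sufficiency, suppose $G$ fails $\mathcal P$. Choose a witness $(S,v)$ with $|S|$ as small as possible, so that $v\in I^2_{P_3}[S]\setminus I_{P_3}[S]$. Then $v\notin S$ and $|N(v)\cap S|\le 1$, while $v$ has two distinct neighbors $a,b\in I_{P_3}[S]$; moreover $\{a,b\}\not\subseteq S$, since otherwise $v\in I_{P_3}[S]$. Whenever a neighbor $a$ (respectively $b$) of $v$ lies in $I_{P_3}[S]\setminus S$, it has two distinct neighbors in $S$, which I name $c_1,c_2$ (respectively $d_1,d_2$). The minimality of $S$ forces every element of $S$ to play an essential role in the percolation chain certifying $v\in I^2_{P_3}[S]$, so $S$ consists exactly of the chosen witnesses together with the element (if any) of $\{a,b\}\cap S$, giving $|S|\le 4$.

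The proof is completed by a systematic case analysis based on (i) which of $a,b$ belong to $S$; (ii) whether the witness sets for $a$ and $b$ overlap and whether any witness equals $a$ or $b$; and (iii) the status of those adjacencies among $\{a,b,c_1,c_2,d_1,d_2\}$ that are not already forced by the construction. The constraints $v\notin I_{P_3}[S]$ and $|N(v)\cap S|\le 1$ rule out additional edges from $v$ into $S$, while the definition of $I_{P_3}$ forces the edges $va$, $vb$, $ac_1$, $ac_2$, $bd_1$, $bd_2$ whenever the corresponding vertices appear. In each resulting subcase the induced subgraph of $G$ on the relevant vertex set is isomorphic to one of the graphs listed in Figure~\ref{fig:prohibited_graph}. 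The main obstacle is the bookkeeping in this enumeration: I must verify that every combinatorial possibility for the optional adjacencies and for the identifications among $\{c_1,c_2,d_1,d_2,a,b\}$ reduces to precisely one of the listed forbidden configurations, with no cases overlooked.
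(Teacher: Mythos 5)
Your reduction for the hard direction is sound and is, in fact, organized a bit more cleanly than the paper's argument: by monotonicity of $I_{P_3}$, a witness $(S,v)$ can be shrunk to the set $S'$ keeping, for each of $a,b$, either that vertex itself (if it lies in $S$) or two of its neighbours in $S$; then $(S',v)$ is still a witness, so a minimum witness has $|S|\le 4$ and the subgraph induced by $\{v,a,b\}\cup S$ (at most seven vertices) already violates $\mathcal P$. The paper does not use this minimization; it chases adjacencies from an arbitrary witness, splitting on whether $N(v)\cap S$ is empty (its Cases 1 and 2). But your proposal stops exactly where the content of the theorem lies. The sufficiency direction \emph{is} the case analysis, namely the verification that every admissible configuration on these at most seven vertices contains one of the listed graphs as an induced subgraph. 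You only announce that the enumeration closes (``in each resulting subcase the induced subgraph \ldots\ is isomorphic to one of the graphs listed'') and explicitly defer the verification as ``the main obstacle''; you likewise do not exhibit the witnesses showing that each forbidden graph fails $\mathcal P$. As it stands the hard direction is asserted rather than proved, which is a genuine gap, not a presentational one.

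Three concrete cautions for when you carry the enumeration out. First, the induced subgraph on the whole relevant vertex set will in general not be isomorphic to a listed graph; you must locate a forbidden graph on a suitable subset. Second, the free data include not only the optional adjacencies among $\{a,b,c_1,c_2,d_1,d_2\}$ and the possible identifications among $c_1,c_2,d_1,d_2$, but also the one edge from $v$ into $S$ that the condition $v\notin I_{P_3}[S]$ still permits; it must be carried through the cases, not ruled out. Third, the enumeration cannot close with the four graphs named in the figure caption alone: the paper's proof repeatedly produces a fifth graph $G_5$, the $C_4$ with a pendant vertex, which itself violates $\mathcal P$ (on the cycle $p,q,r,s$ with pendant $t$ attached to $p$, take $S=\{t,q,r\}$; then $I_{P_3}[S]=S\cup\{p\}$ while $s\in I^2_{P_3}[S]$) and contains none of the diamond, paw, chair, or $K_{2,3}$ as an induced subgraph. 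So your list of targets must include it, and the subcases in which it arises are exactly where the paper's bookkeeping is heaviest — precisely the work your proposal leaves undone.
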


\begin{figure}[htb] 
\begin{center}
\includegraphics[scale=0.5]{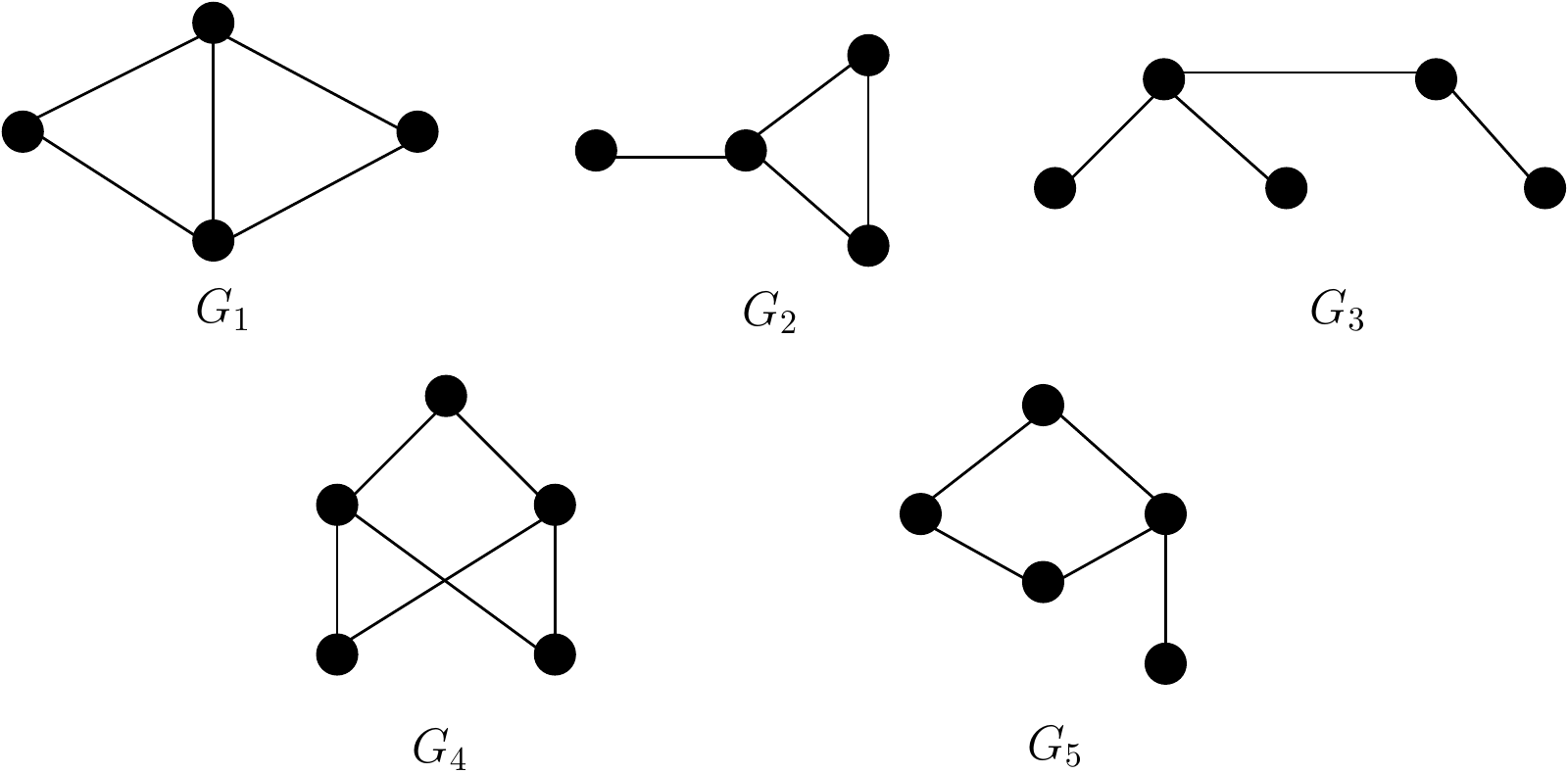}
\end{center}
\caption{$G_1$ is the diamond, $G_2$ is the paw, $G_3$ is the chair, and $G_4$ is $K_{2,3}$.}\label{fig:prohibited_graph}
\end{figure}

\begin{proof}
It is easy to check that $G_i$ does not satisfy the property $\mathcal P$ for each $1\le i\le 5$. 

Conversely, if $G$ does not satisfy the property $\mathcal P$, then $G$ contains, as an induced subgraph, one of the graphs depicted in Figure~\ref{fig:prohibited_graph}. Let $S$ be a subset of vertices of $G$ such that  $I_{P_3}[S]$ is properly contained in $I^{2}_{P_3}[S]$. Hence there exist two vertices $u$ and $v$ such that $u\in I_{P_3}[S]- S$ and $v\in I^{2}_{P_3}[S]- I_{P_3}[S]$. We will split the proof into two cases.
\vspace{0.5cm}

\textbf{Case 1: $N(v)\cap S\neq\emptyset$}. 
\vspace{0.5cm}

Assume first that there exists $z\in N(v)\cap N(u)\cap S$. Since $u\in I_{P_3}[S]$, there exists a vertex $w$ distinct of $z$ which is adjacent to $u$ and nonadjacent to $v$, because otherwise $v\in I_{P_3}[S]$.
If $w$ is adjacent to $z$, then $\{z,u,v,w\}$ either induces $G_1$ (if $v$ is adjacent to $u$) or  $\{z,u,v,w\}$ induces $G_2$ (if $v$ is nonadjacent to $u$). Hence $w$ is nonadjacent to $z$.  Besides, $u$ is nonadjacent to $v$, because otherwise $\{u,v,w,z\}$ induces $G_2$. Since $v\in I^2_{P_3}[S]$, there exists a vertex $x\in I^1_{P_3}[S]\setminus \{u\}$ which is adjacent to $v$. If $x$ is adjacent to $z$, then $\{u,v,x,z\}$ either induces $G_1$ (if $x$ is adjacent to $u$) or induces $G_2$ (if $x$ is nonadjacent to $u$). Hence $x$ is nonadjacent to $z$. On the one hand, if $x$ is adjacent to $u$ and $w$, then $\{u,v,w,x\}$ induces $G_1$. On the other hand, if $x$ is adjacent to $u$ and nonadjacent to $w$, then $\{u,v,w,x,z\}$ induces $G_5$. Thus $x$ is nonadjacent to $u$. Since $x\in I_{P_3}[S]-S$, there exists a vertex $s_1\in S\setminus\{w,z\}$ which is adjacent to $x$. If $x$ is adjacent to $w$ and $w$ is nonadjacent to $s_1$, then either $\{v,w,x,s_1,z\}$ induces $G_3$. In addition, if $x$ is adjacent to $w$ and $w$ is adjacent to $s_1$, then $\{v,s_1,w,x\}$ induces $G_2$. Hence $x$ is nonadjacent to $w$. Notice also that $s_1$ is nonadjacent to $z$. If $s_1$ is adjacent to $z$ and $u$, then $\{s_1,u,x,z\}$ induces $G_2$. If $s_1$ is adjacent to $z$ and nonadjacento to $u$, then $\{s_1,u,v,x,z\}$ induces $G_5$. Hence $s_1$ is nonadjacent to $z$.  Since $x$ is nonadjacent to $z$ and $w$, there exists a vertex $s\in S-\{s_1,w,z\}$ which is adjacent to $x$. By symmetry, $s_2$ is nonadjacent to $v$ and $z$. If $s_1$ is adjacent to $s_2$, then $\{s_1,s_2,v,x\}$ induces $G_2$. Therefore, if $s_1$ is nonadjacent to $s_2$, then $\{s_1,s_2,x,v,z\}$ induces $G_3$, a contradiction.
 
Assume now that $N(v)\cap N(u)\cap S=\emptyset$ and thus there exists a vertex $z\in S$ adjacent to $v$ but nonadjacent to $u$, and there exist two vertices $w_1,w_2\in S$ adjacent to $u$ and nonadjacent to $v$. Suppose, towards a contradiction, that $u$ is adjacent to $v$. Hence $w_1$ is nonadjacent to $w_2$, because otherwise $\{u,v,w_1,w_2\}$ induces $G_2$. On the one hand, if $z$ is adjacent to $w_1$ and $w_2$, then $\{u,v,w_1,w_2,z\}$ induces $G_4$. On the other hand, if $z$ is adjacent to exactly one of $w_1$ and $w_2$, then $\{u,v,w_1,w_2,z\}$ induces $G_5$. Hence $z$ is nonadjacent to $w_1$ and $w_2$. Thus $\{u,v,w_1,w_2,z\}$ induces $G_3$, a contradiction. 
Therefore, $u$ is nonadjacent to $v$ which implies that there exists a vertex $x\in I_{P_3}^1[S]-S$ adjacen to $v$. We may assume, by the discussion of the above paragraph, that $x$ is nonadjacent to $z$, otherwise $N(v)\cap N(x)\cap S\neq\emptyset$. Consequently, there exist two vertices $a$ and $b$ in $S-\{z\}$ adjacent to $x$. Since $G$ has no $G_2$ as induced subgraph, $a$ is nonadjacent to $b$. Besides, since $G$ has no $G_4$ neither $G_5$ as induced subgraph, $a$ and $b$ are nonadjacent to $z$. Therefore, $\{a,b,x,v,z\}$ induces $G_3$.
\vspace{0.5cm}
  
\textbf{Case 2: $N(v)\cap S=\emptyset$}. 
\vspace{0.5cm}

There exist two vertices $u_1,u_2\in I_{P_3}[S]\setminus S$ which are adjacent to $v$. Besides, there exists two vertices $x_1,x_2\in S$ which are adjacent to $u_1$. If $x_1$ is adjacent to $x_2$, then $\{u_1,v,x_1,x_2\}$ induces $G_2$. Hence $x_1$ is nonadjacent to $x_2$. On the one hand, if $x_1$ is adjacent to $u_2$ and $u_1$ is adjacent to $u_2$ then $\{u_1,v,u_1,u_2\}$ induces $G_1$. On the other hand, if $x_1$ is adjacent to $u_2$ and $u_1$ is nonadjacent to $u_2$ then $\{u_1,v,u_1,u_2\}$ induces $G_2$. Thus $x_1$ is nonadjacent to $u_2$. By symmetry, $x_2$ is nonadjacent to $u_2$. Consequently, $u_1$ is nonadjacent to $u_2$, because otherwise $\{u_1,u_2,v,x\}$ induces $G_2$ and thus $u_1$ is nonadjacent to $u_2$. Therefore, $\{u_1,u_2,v,x_1,x_2\}$ induces $G_3$.
\vspace{0.5cm}

We have already proved that in all possible cases the graph $G$ which does not satisfies the property $\mathcal P$ contains one of the graph depicted in Figure~\ref{fig:prohibited_graph} as induced subgraph. 
\end{proof}


\begin{coro}\label{cor: ultimo}
Let $G$ be a graph. If $G$ satisfies the property $\mathcal P$, then $g_{P_3}(G)= h_{P_3}(G)$.
\end{coro}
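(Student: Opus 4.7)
The plan is to exploit property $\mathcal P$ to collapse the iterated interval operator to a single application, and then combine it with the trivial inequality $h_{P_3}(G)\le g_{P_3}(G)$ already noted in the introduction of the paper. Since the latter holds for every graph, it suffices to establish $g_{P_3}(G)\le h_{P_3}(G)$ under the hypothesis that $G$ satisfies $\mathcal P$.

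First, I would fix a minimum $P_3$-hull set $H$ of $G$, so that $|H|=h_{P_3}(G)$ and $I^{k}_{P_3}[H]=V(G)$ for some integer $k\ge 1$. The key step is to prove, by induction on $k$, that $I^{k}_{P_3}[H]=I_{P_3}[H]$ for every $k\ge 1$. The base case $k=1$ is trivial, and for the inductive step I would apply property $\mathcal P$ to the set $S=I^{k-2}_{P_3}[H]$, which yields $I^{2}_{P_3}[S]=I_{P_3}[S]$; rewriting this gives $I^{k}_{P_3}[H]=I^{k-1}_{P_3}[H]$, and the inductive hypothesis finishes the argument.

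Combining these two observations, $V(G)=I^{k}_{P_3}[H]=I_{P_3}[H]$, which says exactly that every vertex of $V(G)\setminus H$ has at least two neighbors in $H$; that is, $H$ is a $P_3$-geodetic set of $G$. Therefore $g_{P_3}(G)\le |H|=h_{P_3}(G)$, and together with $h_{P_3}(G)\le g_{P_3}(G)$ we obtain the desired equality.

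There is no real obstacle here: the only delicate point is to apply property $\mathcal P$ to the right intermediate set $I^{k-2}_{P_3}[H]$ rather than to $H$ itself, which is precisely what makes the induction go through and turns a single-step collapse into a global one.
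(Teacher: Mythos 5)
Your proof is correct and follows essentially the argument the paper intends: the corollary is stated without proof as an immediate consequence, and your observation that property $\mathcal P$ collapses the iterated interval operator, so that a minimum hull set $H$ satisfies $I_{P_3}[H]=V(G)$ and is therefore already a $P_3$-geodetic set, combined with the general inequality $h_{P_3}(G)\le g_{P_3}(G)$, is exactly the expected reasoning.
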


Corollary~\ref{cor: ultimo} shows that Theorem~\ref{thm: special class} is a characterization of a subclass of those graphs $G$ such that $h(H)=g(H)$ for every induced subgraph of $G$, characterized in~\cite{centeno}.

\section*{Acknowledgments}

This work was partially supported by ANPCyT PICT 2017-13152 and Universidad Nacional del Sur Grant PGI 24/L115.

\bibliographystyle{abbrv}
\bibliography{caterpillar}

\begin{thebibliography}{1}

\bibitem{1}
F.~Benevides, V.~Campos, M.~C. Dourado, R.~M. Sampaio, and A.~Silva.
\newblock The maximum time of 2-neighbour bootstrap percolation: algorithmic
  aspects.
\newblock {\em European J. Combin.}, 48:88--99, 2015.

\bibitem{Centeno2011}
C.~C. Centeno, M.~C. Dourado, L.~D. Penso, D.~Rautenbach, and J.~L.
  Szwarcfiter.
\newblock Irreversible conversion of graphs.
\newblock {\em Theoret. Comput. Sci.}, 412(29):3693--3700, 2011.

\bibitem{centeno}
C.~C. Centeno, L.~D. Penso, D.~Rautenbach, and V.~G. Pereira~de S\'a.
\newblock Geodetic number versus hull number in {$P_3$}-convexity.
\newblock {\em SIAM J. Discrete Math.}, 27(2):717--731, 2013.

\bibitem{Marcilon2018}
T.~Marcilon and R.~Sampaio.
\newblock The maximum time of 2-neighbor bootstrap percolation: complexity
  results.
\newblock {\em Theoretical Computer Science}, 708:1--17, 2018.

\bibitem{McKee}
T.~A. Mc{K}ee and {M}cMorris F.~R.
\newblock {\em Intersection graph theory}.
\newblock SIAM Monographs on Discrete Mathematics and Applications, 1999.

\bibitem{West01}
D.~B. West.
\newblock {\em Introduction to graph theory (2nd edition)}.
\newblock Pearson, 2000.

\end{thebibliography}

\end{document}